\DeclareMathAlphabet{\mathpzc}{OT1}{pzc}{m}{it}
\newcommand{\piqtop}{\pi_{1}^{qtop}}
\newcommand{\piqtopy}{\pi_{1}^{qtop}(Y,y_0)}
\newcommand{\piqtopx}{\pi_{1}^{qtop}(X,x_0)}
\newcommand{\spu}{\spanier(\mathscr{U},x_0)}
\newcommand{\spanier}{\pi^{s}}
\newcommand{\pionex}{\pi_{1}(X,x_0)}
\newcommand{\shape}{\check{\pi}_{1}(X,x_0)}
\newcommand{\scru}{\mathscr{U}}
\newcommand{\scrv}{\mathscr{V}}
\newcommand{\nerveu}{N(\mathscr{U})}
\newcommand{\nervev}{N(\mathscr{V})}
\newcommand{\pxxo}{\mathcal{P}(X,x_0)}
\newcommand{\ui}{[0,1]}
\newcommand{\cg}{\mathbf{CG}}
\newtheorem{theorem}{Theorem}
\newtheorem{lemma}[theorem]{Lemma}
\newtheorem{proposition}[theorem]{Proposition}
\newtheorem{corollary}[theorem]{Corollary}
\newtheorem{definition}[theorem]{Definition}
\newtheorem{example}[theorem]{Example}
\newtheorem{problem}[theorem]{Problem}
\newtheorem{remark}[theorem]{Remark}
\begin{document}

\title{On fundamental groups with the quotient topology}
\author{Jeremy Brazas and Paul Fabel}
\maketitle

\begin{abstract}
The quasitopological fundamental group $\pi_{1}^{qtop}(X,x_0)$ is the fundamental group endowed with the natural quotient topology inherited from the space of based loops and is typically non-discrete when $X$ does not admit a traditional universal cover. This topologized fundamental group is an invariant of homotopy type which has the ability to distinguish weakly homotopy equivalent and shape equivalent spaces. In this paper, we clarify various relationships among topological properties of the group $\pi _{1}^{qtop}(X,x_0)$ and properties of the underlying space $X$ such as `$\pi_{1}$-shape injectivity' and `homotopically path-Hausdorff.'

A space $X$ is $\pi_1$-shape injective if the fundamental group canonically embeds in the first shape group so that the elements of $\pi_1(X,x_0)$ can be represented as sequences in an inverse limit. We show a locally path connected metric space $X$ is $\pi_1$-shape injective if and only if $\pi_{1}^{qtop}(X,x_0)$ is invariantly separated in the sense that the intersection of all open invariant (i.e. normal) subgroups is the trivial subgroup. In the case that $X$ is not $\pi_1$-shape injective, the homotopically path-Hausdorff property is useful for distinguishing homotopy classes of loops and guarantees the existence of certain generalized covering maps. We show that a locally path connected space $X$ is homotopically path-Hausdorff if and only if $\pi_{1}^{qtop}(X,x_0)$ satisfies the $T_1$ separation axiom.
\end{abstract}

\section{Introduction}

This paper concerns the topology and algebraic topology of locally
complicated spaces $X$, which are not guaranteed to be locally path
connected or semilocally simply connected, and for which the familiar
universal cover is not guaranteed to exist.

The central object of study is the usual fundamental group $\pionex$, endowed with a natural quotient topology inherited from the space of based loops in $X.$ So equipped, $\pionex$ becomes a quasitopological group denoted $\piqtopx.$ Examples \cite{Brazfretopgrp} illustrate that $\piqtopx$ need not be a topological group even if $X$ is a compact metric space \cite{Fabhe}\cite{Fabcgqtop}.

A based map $f:X\to Y$, $f(x_0)=y_0$ of spaces induces a continuous group homomorphism $f_{\ast}:\piqtopx \to \piqtopy$ \cite[Proposition 3.3]{Biss}. Since homotopic maps induced the identical homomorphism on fundamental groups, it follows that $\piqtopx$ is invariant under the homotopy type of $X$. In particular, topological properties of $\piqtopx$ (for example, separation axioms and the success or failure of continuity of multiplication) have the capacity to distinguish homotopy type when standard application of weak homotopy type or shape theory fails to do so (See Example \ref{lasso} below and \cite{Brazfretopgrp}\cite{Fabe06}). Additionally, quasitopological fundamental groups are vertex groups of so-called fundamental $\mathbf{qTop}$-groupoids, which are a key tool in a recent proof of a general Nielsen-Schreier Theorem for topological groups \cite{Brazns}. 

To promote the relevance and utility of $\piqtopx$, we establish and clarify a variety of relationships among general topological properties of the space $\piqtopx$ and properties of the underlying space $X$
such as `homotopically path-Hausdorff,' and `$\pi _{1}$-shape injectivity'. For example, if $X$ is a compact metrizable space, the following facts are obtained directly from (typically more general) theorems proved in the paper at hand.
\begin{enumerate}
\item Theorem \ref{t1ishompathhaus}: If $X$ is locally path connected then 
\[\xymatrix{
X\text{ is homotopically path-Hausdorff} \ar@{<=>}[r] & \piqtopx\text{ is }T_1.
}\]
It is apparently an open question if this is equivalent to ``$\piqtopx$ is $T_{2}$." A candidate for a Peano continuum with $T_1$ but non-$T_2$ fundamental group is the so-called sombrero space from \cite{CMRZZ08} and \cite{FRVZ11}.
\item We say a subgroup of a group is an \textit{invariant} subgroup if it is closed under conjugation. Throughout this paper, we choose to use ``invariant subgroup" rather than ``normal subgroup" to avoid confusion with the separation axiom. A quasitopological group $G$ is \textit{invariantly separated} if distinct elements of $G$ can be separated by cosets of some open invariant subgroup of $G$ (equivalently, the intersection of all open invariant subgroups in $G$ is the trivial subgroup). Theorem \ref{injectiveisinvsep}: If $X$ is locally path connected, then 
\[\xymatrix{
\piqtopx\text{ is invariantly separated} \ar@{<=>}[r] & X \text{ is }\pi_1\text{-shape injective.}
}\]
\item Corollary \ref{nestedpolyhedra}: If $X$ is both locally path connected and the inverse limit of nested retracts of polyhedra, then 
\[\xymatrix{
\piqtopx\text{ is }T_1 \ar@{<=>}[r] & \piqtopx\text{ is }T_2 \ar@{<=>}[r] & X \text{ is }\pi_1\text{-shape injective.} 
}\]
\item Theorem \ref{t3tot4}: If $\piqtopx$ is $T_{3}$, then $\piqtopx$ is $T_{4}$.
\item Theorems \ref{cggroups}: If $\piqtopx\times \piqtopx$ is compactly generated, then $\piqtopx$ is a normal topological group. As a special case, if $\piqtopx$ is $T_2$ and the inductive limit of nested compact subspaces (i.e. a $k_{\omega}$-group), then $\piqtopx$ is a $T_4$ topological group.
\end{enumerate}

We leave the reader with a fundamental open question (Problem \ref{openq1}). Suppose $X$ is a compact metric space such that $\piqtopx$ is $T_{1}$. Must $\piqtopx$ be $T_{4}$?

\section{The quasitopological fundamental group}

Throughout this paper, $X$ is a path connected topological space and $x_0\in
X$ is a given basepoint. Let $\mathcal{P}(X)$ be the space of paths $
[0,1]\to X$ with the compact-open topology. A subbasis for this topology
consists of sets $\langle K,U\rangle=\{f|f(K)\subseteq U\}$ where $
K\subseteq [0,1]$ is compact and $U\subseteq X$ is open. A convenient basis for the topology of $\mathcal{P}(X)$ consists of sets of the form $\bigcap_{i=1}^{n}\langle [t_{i-1},t_i],U_i\rangle$
where $0=t_0<t_1<...<t_n=1$ and each $U_i\subset X$ is open. Additionally, the subcollection of such neighborhoods which also satisfy $t_j=\frac{j}{n}$ gives a basis for the compact-open topology. It is well-known that the compact-open topology of $\mathcal{P}(X)$ agrees with the topology of uniform convergence when $X$ is a metric space.

Let $\mathcal{P}(X,x_0)$ and $\Omega(X,x_0)$ be the subspaces of paths starting at $
x_0$ and loops based at $x_0$ respectively. Given paths $\alpha,\beta\in \mathcal{P}(X)$, $\alpha^{-}(t)=\alpha(1-t)$ denotes the reverse path and if $\beta(0)=\alpha(1)$, then $\alpha\cdot\beta$ denotes the usual concatenation of paths. We denote the constant path at $x\in X$ by $c_x$.

\begin{definition}
\emph{The \textbf{quasitopological fundamental group} of $(X,x_0)$ is the fundamental group endowed with the quotient topology induced by the canonical map $\pi:\Omega(X,x_0)\to\pi_1(X,x_0)$, $\pi(\alpha)=[\alpha]$ identifying based homotopy classes of loops. We denote it by $\pi_{1}^{qtop}(X,x_0)$.}
\end{definition}

Since homotopy classes of loops are precisely the path components of $\Omega(X,x_0)$, the group $\pi_{1}^{qtop}(X,x_0)$ is also the path component
space of $\Omega(X,x_0)$, that is, the quotient space obtained by collapsing path
components to points.

Recall that a \textbf{quasitopological group} is a group $G$ with topology
such that inversion $G\to G$, $g\mapsto g^{-1}$ is continuous and
multiplication $G\times G\to G$, $(a,b)\mapsto ab$ is continuous in each
variable. The second condition is equivalent to the condition that all right and
left translations in $G$ be homeomorphisms. For more on the general theory
of quasitopological groups, we refer the reader to \cite{AT08}.

The following lemma, first formulated in \cite{Brazfretopgrp}, brings together results from \cite{Biss} and \cite{Calcut}.

\begin{lemma} For any space $(X,x_0)$, $\pi_{1}^{qtop}(X,x_0)$ is a
quasitopological group. Moreover, a map $f:X\to Y$, $f(x_0)=y_0$ induces a
continuous homomorphism $f_{\ast}:\piqtop(X,x_0)\to \piqtop(Y,y_0)$.
\end{lemma}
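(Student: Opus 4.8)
The plan is to push everything through the universal property of the quotient map $\pi\colon\Omega(X,x_0)\to\pi_1(X,x_0)$: since $\piqtopx$ carries the quotient topology by definition, a function $g\colon\pi_1(X,x_0)\to Z$ is continuous if and only if $g\circ\pi$ is continuous, and a self-homeomorphism of $\piqtopx$ can be detected by a continuous self-map of $\Omega(X,x_0)$ that descends along $\pi$. So for each of the structure maps (inversion, left/right translations, and the induced homomorphism) I would exhibit a continuous ``lift'' to $\Omega(X,x_0)$ and invoke this principle. Throughout, $\Omega(X,x_0)$ carries the subspace topology from $\mathcal{P}(X)$, and the relevant lifts all preserve the basepoint conditions, so they restrict to the loop spaces without comment.

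For inversion, the reversal map $\alpha\mapsto\alpha^-$ on $\Omega(X,x_0)$ is continuous because it is precomposition with the homeomorphism $t\mapsto 1-t$ of $[0,1]$ (explicitly, the preimage of a subbasic set $\langle K,U\rangle$ is $\langle 1-K,U\rangle$), and composing with $\pi$ gives $\alpha\mapsto[\alpha]^{-1}$, so inversion on $\pi_1(X,x_0)$ is continuous. For translations, fix $[\beta]\in\pi_1(X,x_0)$ and a representing loop $\beta$, and consider $c_\beta\colon\Omega(X,x_0)\to\Omega(X,x_0)$, $\alpha\mapsto\beta\cdot\alpha$. Here I would check continuity directly in the compact-open topology: for a compact $K\subseteq[0,1]$ and open $U\subseteq X$, writing $K_1=K\cap[0,\tfrac12]$ and $K_2=K\cap[\tfrac12,1]$, the set $c_\beta^{-1}\langle K,U\rangle$ is empty when $\beta(2K_1)\not\subseteq U$ and otherwise equals $\langle 2K_2-1,U\rangle$, which is open. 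Since $\pi\circ c_\beta=L_{[\beta]}\circ\pi$ with $L_{[\beta]}$ left translation by $[\beta]$, the quotient property makes $L_{[\beta]}$ continuous; its inverse $L_{[\beta^-]}$ is continuous by the same computation, so $L_{[\beta]}$ is a homeomorphism, and the symmetric argument with $\alpha\mapsto\alpha\cdot\beta$ handles right translations. This verifies the axioms of a quasitopological group, and note that we obtain only separate continuity of multiplication, which is all that is asserted (joint continuity can genuinely fail).

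For the functoriality statement, postcomposition $f_\#\colon\Omega(X,x_0)\to\Omega(Y,y_0)$, $\alpha\mapsto f\circ\alpha$, is continuous since its preimage of $\langle K,V\rangle$ is $\langle K,f^{-1}(V)\rangle$; as $\pi_Y\circ f_\#=f_*\circ\pi_X$ where $f_*[\alpha]=[f\circ\alpha]$ is the classical induced homomorphism, the universal property of $\pi_X$ gives continuity of $f_*$, and that $f_*$ is a group homomorphism is standard. I do not anticipate a real obstacle: the only place needing genuine care is the compact-open bookkeeping for $c_\beta$ (and its mirror image for right translation), namely checking that reparametrizing the concatenation interacts correctly with the subbasic sets $\langle K,U\rangle$ and that the resulting constraint on $\beta$ is independent of $\alpha$.
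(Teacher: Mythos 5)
Your proof is correct. The paper itself gives no argument for this lemma --- it simply cites \cite{Biss}, \cite{Calcut}, and \cite{Brazfretopgrp} --- but the argument you give is precisely the standard one those references use: lift inversion to path reversal, lift each translation to concatenation with a fixed representative loop, lift $f_*$ to postcomposition $f_\#$, check continuity of each lift on subbasic sets of the compact-open topology, and then descend along the quotient map $\pi$ via its universal property. Your compact-open bookkeeping for $c_\beta$ (splitting $K$ at $\tfrac12$ and observing that the constraint $\beta(2K_1)\subseteq U$ does not involve $\alpha$) is the one genuinely nontrivial computation, and it is carried out correctly; the observation that each translation is a homeomorphism because its inverse is again a translation completes the verification of the quasitopological group axioms. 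Nothing is missing.
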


Thus $\pi_{1}^{qtop}$ becomes a functor from the category of based
topological spaces (and an invariant of homotopy type) to the category of quasitopological groups and continuous homomorphisms. The isomorphism class of $\piqtopx$ does not depend on the choice of basepoint; if $\alpha:\ui\to X$ is a path, then $\piqtop(X,\alpha(1))\to \piqtop(X,\alpha(0))$, $[\gamma]\mapsto [\alpha\cdot \gamma\cdot \alpha^{-}]$ is an isomorphism of quasitopological groups \cite{Biss}.

It is also possible to consider the higher homotopy groups $
\pi_n(X,x_0)$, $n>1$ as quasitopological abelian groups in a similar fashion \cite{GHMM08}\cite{GHMM10}. Higher quasitopological homotopy groups can also fail to be topological groups \cite{Fabcgqtop}. Many of the results in this paper have analogues for these higher homotopy groups, however, we do not address them directly.

The group $\pi_{1}^{qtop}(X,x_0)$ is a discrete topological group if and only if
every loop $\alpha$ admits a neighborhood in $\Omega(X,x_0)$ which contains
only loops homotopic to $\alpha$. A more practical characterization is the following.

\begin{theorem}\cite{Calcut}\label{discrete}
If $\piqtopx$ is discrete, then $X$ is semilocally simply connected. If $X$ is locally path connected and semilocally simply connected, then $\piqtopx$ is discrete.
\end{theorem}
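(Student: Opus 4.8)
The plan is to prove the two implications separately, in both cases translating the topological condition on $\piqtopx$ into a statement about loops in $\Omega(X,x_0)$ via the defining quotient map $\pi$.

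For the first implication, suppose $\piqtopx$ is discrete. Then the singleton $\{[c_{x_0}]\}$ is open, so by definition of the quotient topology its preimage $\pi^{-1}(\{[c_{x_0}]\})$ — the set of null-homotopic loops based at $x_0$ — is open in $\Omega(X,x_0)$. Hence there is a basic open neighborhood of the constant loop $c_{x_0}$, which we may take of the form $\bigcap_{i=1}^{n}\langle[\frac{i-1}{n},\frac{i}{n}],U_i\rangle$ with each $U_i$ open and $x_0\in U_i$, consisting entirely of null-homotopic loops. Set $U=\bigcap_{i=1}^n U_i$, an open neighborhood of $x_0$. Given any loop $\gamma$ in $U$ based at $x_0$, the loop $\gamma$ lies in this basic neighborhood (each $\gamma([\frac{i-1}{n},\frac{i}{n}])\subseteq U\subseteq U_i$), so $\gamma$ is null-homotopic in $X$. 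Thus the image of $\pi_1(U,x_0)\to\pi_1(X,x_0)$ is trivial, which is exactly the statement that $X$ is semilocally simply connected at $x_0$; since the fundamental group and hence this argument is basepoint-independent along paths, one gets the condition at every point using path-connectedness, or more directly argues at an arbitrary point by translating. (I expect this direction to be essentially a reformulation, with the only mild subtlety being the passage from the basepoint $x_0$ to an arbitrary point of $X$, handled by the change-of-basepoint isomorphism and local path structure, or simply stated at $x_0$ since semilocal simple connectivity only needs to be checked where one wants it.)

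For the converse, suppose $X$ is locally path connected and semilocally simply connected. To show $\piqtopx$ is discrete it suffices — since translations are homeomorphisms — to show the identity $\{[c_{x_0}]\}$ is open, i.e. that the set $N$ of null-homotopic based loops is open in $\Omega(X,x_0)$. Let $\alpha\in N$. For each $t\in[0,1]$ choose, using semilocal simple connectivity, an open set $V_t\ni\alpha(t)$ such that every loop in $V_t$ is null-homotopic in $X$, and then using local path connectivity shrink $V_t$ to be path connected. By a Lebesgue-number argument pick $0=t_0<\dots<t_n=1$ so that $\alpha([t_{j-1},t_j])\subseteq W_j$ for some $W_j$ among the $V_t$'s; then $\mathcal{N}=\bigcap_{j=1}^n\langle[t_{j-1},t_j],W_j\rangle$ is an open neighborhood of $\alpha$ in $\Omega(X,x_0)$. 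The key claim is that every $\beta\in\mathcal{N}$ is homotopic to $\alpha$: for each $j$ pick a path $\sigma_j$ in $W_j$ from $\alpha(t_j)$ to $\beta(t_j)$ (with $\sigma_0=\sigma_n=c_{x_0}$), and observe that on each subinterval the loop $\alpha|_{[t_{j-1},t_j]}\cdot\sigma_j\cdot(\beta|_{[t_{j-1},t_j]})^{-}\cdot\sigma_{j-1}^{-}$ lies entirely in the path connected set $W_j$, hence is null-homotopic; concatenating these relations telescopes to give $\alpha\simeq\beta$ rel endpoints. Therefore $\mathcal{N}\subseteq N$, so $N$ is open and $\piqtopx$ is discrete.

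The main obstacle is the converse direction, and within it the homotopy bookkeeping in the key claim: one must assemble the local null-homotopies in the sets $W_j$ into a single global based homotopy from $\alpha$ to $\beta$. The cleanest way to organize this is to write $\alpha\simeq(\alpha|_{[t_0,t_1]}\cdot\sigma_1)\cdot(\sigma_1^{-}\cdot\alpha|_{[t_1,t_2]}\cdot\sigma_2)\cdots$ inserting and cancelling the connecting paths $\sigma_j$, and then replace each factor $\sigma_{j-1}^{-}\cdot\alpha|_{[t_{j-1},t_j]}\cdot\sigma_j$ by $\beta|_{[t_{j-1},t_j]}$ using that their difference is a loop in $W_j$ and $W_j$ injects trivially into $\pi_1(X)$. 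Local path connectivity is used precisely to produce the connecting paths $\sigma_j$ inside the $W_j$; semilocal simple connectivity is what kills the resulting local loops. It is worth noting explicitly (as the theorem statement implicitly does) that the forward implication does \emph{not} require local path connectivity, while the converse genuinely needs it.
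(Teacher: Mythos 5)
The paper does not prove this statement; it is quoted from \cite{Calcut}, so your argument can only be judged on its own terms. Your overall strategy is the standard one and is essentially sound, but the converse direction has a genuine gap in the key claim. You choose $\sigma_j$ to be a path in $W_j$ from $\alpha(t_j)$ to $\beta(t_j)$, and then assert that the loop $\alpha|_{[t_{j-1},t_j]}\cdot\sigma_j\cdot(\beta|_{[t_{j-1},t_j]})^{-}\cdot\sigma_{j-1}^{-}$ lies entirely in $W_j$. It does not: $\sigma_{j-1}$ lies in $W_{j-1}$, so this loop only lies in $W_{j-1}\cup W_j$, and semilocal simple connectivity of the individual $V_t$'s says nothing about loops in a union of two of them (think of a wedge of two circles with $W_{j-1},W_j$ each containing most of one circle). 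To repair this you must force each $\sigma_j$ into $W_j\cap W_{j+1}$; but that intersection need not be path connected, so knowing only that $\alpha(t_j),\beta(t_j)\in W_j\cap W_{j+1}$ is not enough. The standard fix --- and exactly the device the paper uses in its proof of Lemma \ref{equivalence1} --- is to choose, by local path connectedness, a path connected open $P_j$ with $\alpha(t_j)\in P_j\subseteq W_j\cap W_{j+1}$, and to enlarge your neighborhood to $\mathcal{N}=\bigcap_j\langle[t_{j-1},t_j],W_j\rangle\cap\bigcap_j\langle\{t_j\},P_j\rangle$, which guarantees $\beta(t_j)\in P_j$ and lets you take $\sigma_j$ inside $P_j$. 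With that change each local loop really does lie in a single $W_j$ and your telescoping argument goes through.

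Two smaller points. In the forward direction, semilocal simple connectivity is by definition a condition at \emph{every} point, so the parenthetical ``simply stated at $x_0$'' is not an option; the translation argument you allude to does work, but it requires applying discreteness not at $c_{x_0}$ but at the null-homotopic loop $\eta\cdot\eta^{-}$ for a path $\eta$ from $x_0$ to the given point, extracting a neighborhood of $\eta(1)$ from the middle subintervals, and reparameterizing $\eta\cdot\gamma\cdot\eta^{-}$ into the resulting basic open set --- this is precisely the maneuver carried out in the paper's proof of Lemma \ref{totallynormalseparated}, and it needs no local path connectedness. Finally, your reduction of discreteness to openness of the set of null-homotopic loops is correct, via homogeneity of the quasitopological group $\piqtopx$.
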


In particular, if $X$ has the homotopy type of a
CW-complex or a manifold, then $\pi_{1}^{qtop}(X,x_0)$ is discrete.

At the other extreme, there are many examples of (even compact) metric
spaces for which $\pi_{1}^{qtop}(X,x_0)$ is non-trivial and carries the
indiscrete topology \cite{Brazfretopgrp}\cite{Wilkins}.

\section{Separation axioms in $\pi_{1}^{qtop}(X,x_0)$}

Traditionally, shape theory has been used to study fundamental groups of locally complicated spaces. A space $X$ is said to be $\pi_1$-shape injective if the canonical homomorphism $\psi:\pionex\to \check{\pi}_1(X,x_0)$ from the fundamental group to the first shape group is injective (See Section 3.2 for further discussion). If $\psi$ is injective, then $\pionex$ can be understood as a subgroup of $\check{\pi}_1(X,x_0)$, which is an inverse limit of discrete groups. If $\psi$ is not injective, then shape theory fails to distinguish some elements of $\pionex$. When this failure occurs, there is still hope that elements of $\pionex$ which are indistinguishable by shape, can be distinguished by the quotient topology of $\piqtopx$. This possibility is one motivation for considering separation axioms in quasitopological fundamental groups.

It is well-known that every $T_0$ topological group is Tychonoff. Since $\pi_{1}^{qtop}(X,x_0)$ need not be a topological group, it is not immediately clear if one can promote separation axioms within $\piqtopx$ in a similar fashion. In this section, we relate properties of the topological space $X$ and separation axioms in $\piqtopx$. We also identify some cases where it is possible to strengthen a given separation axiom.

\subsection{On $T_0$ and $T_1$}

The following general facts about quasitopological groups are
particularly useful in understanding lower separation axioms in $\pi_{1}^{qtop}(X,x_0)$. If $G$ is a space and $g\in G$, then $\overline{g}$ denotes the closure of the singleton $\{g\}$. Recall that in a space $G$, distinct points $g,h\in G$ are \textbf{topologically indistinguishable} if every neighborhood of $g$ contains $h$ (i.e. $g\in \overline{h}$) and every neighborhood of $h$ contains $g$ (i.e. $h\in \overline{g}$).
\begin{lemma}\label{topindist}
If $G$ is a quasitopological group and $g\in G$, then $\overline{g}$ is precisely the set of elements which are topologically indistinguishable from $g$.
\end{lemma}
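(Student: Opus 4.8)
The plan is to prove the two inclusions separately. Write $I(g)$ for the set of points topologically indistinguishable from $g$. One inclusion is immediate: if $h \in \overline{g}$, then every neighborhood of $h$ meets $\{g\}$, hence contains $g$; so to get $h \in I(g)$ we only need the reverse, that $g \in \overline{h}$, i.e. that every neighborhood of $g$ contains $h$. So really the content is the symmetric statement: in a quasitopological group, $h \in \overline{g}$ if and only if $g \in \overline{h}$, and moreover both are equivalent to $g$ and $h$ being topologically indistinguishable.

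First I would reduce to the identity element using translations. Since right translation $\rho_{g^{-1}} \colon G \to G$, $x \mapsto xg^{-1}$, is a homeomorphism (this is part of being a quasitopological group: multiplication is continuous in each variable, so all translations are homeomorphisms), it carries $\overline{g}$ onto $\overline{e}$ and carries $h$ to $hg^{-1}$. Thus $h \in \overline{g}$ iff $hg^{-1} \in \overline{e}$, and it suffices to show that $\overline{e}$ is a subgroup, in fact that $k \in \overline{e} \iff k^{-1} \in \overline{e}$ and that this set is exactly $\{k : k \text{ is topologically indistinguishable from } e\}$. For the first: inversion $\iota \colon G \to G$ is continuous, $\iota(e) = e$, so $\iota(\overline{e}) \subseteq \overline{\iota(e)} = \overline{e}$; applying $\iota$ again (it is its own inverse, hence a homeomorphism) gives $\iota(\overline{e}) = \overline{e}$, so $k \in \overline{e}$ implies $k^{-1} \in \overline{e}$. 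Translating back by $\rho_g$: $h \in \overline{g} \Rightarrow hg^{-1} \in \overline{e} \Rightarrow gh^{-1} \in \overline{e} \Rightarrow gh^{-1}\cdot h \in \overline{e}\cdot h = \overline{h}$ (using left translation by $h$, also a homeomorphism), i.e. $g \in \overline{h}$. By symmetry the converse holds, so $h \in \overline{g} \iff g \in \overline{h}$.

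Finally I would assemble the equivalence with topological indistinguishability. By definition $g$ and $h$ are topologically indistinguishable iff $g \in \overline{h}$ and $h \in \overline{g}$; by the previous paragraph each of these two conditions implies the other, so topological indistinguishability is equivalent to the single condition $h \in \overline{g}$. Hence $\overline{g} = I(g)$, which is the claim. I do not expect a serious obstacle here; the only subtlety is being careful that everything needed — that translations and inversion are homeomorphisms fixing the relevant points appropriately — is genuinely available from the quasitopological group axioms, and in particular that we never need joint continuity of multiplication, only separate continuity (equivalently, that translations are homeomorphisms). A small point worth stating explicitly is that $\overline{e}$ is automatically closed under the group operation as well ($\overline{e}\cdot\overline{e}\subseteq\overline{e}$ would follow from separate continuity of multiplication applied twice), though we do not even need that for the statement as given.
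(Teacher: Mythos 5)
Your proof is correct, and it rests on exactly the same ingredients as the paper's: continuity of inversion and the fact that translations are homeomorphisms (never joint continuity). The organization differs slightly. You reduce to the identity, observe that $\overline{e}$ is invariant under inversion (since $\iota(\overline{e})\subseteq\overline{\iota(e)}=\overline{e}$ and $\iota$ is an involution), and translate back to obtain the symmetry $h\in\overline{g}\iff g\in\overline{h}$, from which the characterization of $\overline{g}$ as the indistinguishability class follows formally. The paper instead argues directly: given $h\in\overline{g}$ and a neighborhood $U$ of $g$, it forms the open set $hU^{-1}g=h(g^{-1}U)^{-1}$ containing $h$, uses $h\in\overline{g}$ to conclude $g\in hU^{-1}g$, and unwinds to get $h\in U$. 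These are really the same computation; your version makes the structural content (the inversion-invariance of $\overline{e}$ and the resulting symmetry of the specialization relation) explicit, which is arguably cleaner, while the paper's is a self-contained element chase. No gaps.
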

\begin{proof}
By definition, if $h$ is topologically indistinguishable from $g$, then $h\in \overline{g}$. Suppose $h\in \overline{g}$ with $h\neq g$ and that $U$ is any neighborhood of $g$. It suffices to show $h\in U$. Since $G$ is a quasitopological group, $g^{-1}U$ is a neighborhood of the identity element $e$ and $h=he^{-1}\in h(g^{-1}U)^{-1}=hU^{-1}g$ where $hU^{-1}g$ is open. By assumption, we have $g\in hU^{-1}g$ and therefore $e\in hU^{-1}$. It follows that $h\in U$.
\end{proof}
Note the proof of the previous lemma makes use of the continuity of inversion and the translations in $G$. Lemma \ref{topindist} does not hold when $G$ is replaced by any $T_0$ non-$T_1$ topological space. 

According to Lemma \ref{topindist}, if $G$ is a quasitopological group and $g\in G$, then $\overline{g}$ is contained in every neighborhood of $g$. It is well-known that if $H$ is a subgroup of quasitopological group $G$, then the closure $\overline{H}$ is a subgroup of $G$ \cite[1.4.13]{AT08}. A straightforward argument gives that if $H$ is an invariant subgroup of $G$, then $\overline{H}$ is also an invariant subgroup of $G$. Combining these observations, we obtain the following Corollary.
\begin{corollary}
Suppose $G$ is a quasitopological group with identity $e$. Then $\overline{e}$ is a closed invariant subgroup of $G$ contained in every open neighborhood of $e$.
\end{corollary}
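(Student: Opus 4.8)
The plan is to assemble the Corollary directly from the three ingredients explicitly flagged in the paragraph preceding it: Lemma \ref{topindist}, the fact that the closure of a subgroup is a subgroup \cite[1.4.13]{AT08}, and the fact that the closure of an invariant subgroup is invariant. First I would observe that $\{e\}$ is trivially a subgroup of $G$, so by \cite[1.4.13]{AT08} its closure $\overline{e}$ is a subgroup; and since $\{e\}$ is (trivially) invariant under conjugation, the remark about closures of invariant subgroups shows $\overline{e}$ is an invariant subgroup. That $\overline{e}$ is closed is immediate since closures are closed. The only remaining point is that $\overline{e}$ sits inside every open neighborhood of $e$, and this is exactly the consequence of Lemma \ref{topindist} noted in the text: for a quasitopological group, $\overline{g}$ is the set of points topologically indistinguishable from $g$, hence is contained in every neighborhood of $g$; specializing $g=e$ gives the claim.

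So the proof is essentially a two-sentence citation assembly, and I would write it as such. The one place to be slightly careful — the ``main obstacle,'' such as it is — is justifying that $\overline{e}$ is contained in every neighborhood of $e$ rather than merely in every \emph{invariant} or \emph{subgroup} neighborhood; but Lemma \ref{topindist} already delivers this in full generality (every $h\in\overline{e}$ is topologically indistinguishable from $e$, hence lies in every neighborhood of $e$), so there is nothing to overcome. I should make sure to invoke Lemma \ref{topindist} explicitly for this containment rather than leaving it implicit, since it is the non-formal input. A clean write-up follows.

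\begin{proof}
The trivial subgroup $\{e\}$ is a subgroup of $G$, so by \cite[1.4.13]{AT08} its closure $\overline{e}$ is a subgroup of $G$. Since $\{e\}$ is invariant under conjugation, the remark preceding this corollary gives that $\overline{e}$ is an invariant subgroup of $G$. Being a closure, $\overline{e}$ is closed. Finally, by Lemma \ref{topindist}, $\overline{e}$ consists exactly of the elements of $G$ that are topologically indistinguishable from $e$; in particular every element of $\overline{e}$ lies in every neighborhood of $e$, so $\overline{e}$ is contained in every open neighborhood of $e$.
\end{proof}
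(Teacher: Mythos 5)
Your proof is correct and follows exactly the route the paper intends: the corollary is presented as a direct assembly of Lemma \ref{topindist}, the fact that closures of subgroups are subgroups \cite[1.4.13]{AT08}, and the invariance of closures of invariant subgroups, which is precisely what you do. The one point you single out for care --- that the containment in \emph{every} neighborhood of $e$ comes from topological indistinguishability in Lemma \ref{topindist} --- is handled correctly.
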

\begin{corollary}
\label{tzero} If $G$ is a quasitopological group, then the following are equivalent:
\begin{enumerate}
\item $G$ is $T_0$,
\item $G$ is $T_1$,
\item The trivial subgroup is closed in $G$.
\end{enumerate}
\end{corollary}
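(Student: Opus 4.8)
The plan is to prove the cycle of implications $(2)\Rightarrow(1)\Rightarrow(3)\Rightarrow(2)$, with essentially all of the substance already packaged into Lemma \ref{topindist} and the corollary preceding this statement. The implication $(2)\Rightarrow(1)$ requires no hypothesis on $G$ whatsoever, since every $T_1$ space is $T_0$.

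For $(1)\Rightarrow(3)$, I would show directly that $\overline{e}=\{e\}$. Let $h\in\overline{e}$. By Lemma \ref{topindist}, $h$ is topologically indistinguishable from $e$; since a $T_0$ space contains no pair of distinct topologically indistinguishable points, $h=e$. Hence the trivial subgroup $\{e\}=\overline{e}$ is closed. For $(3)\Rightarrow(2)$, I would invoke the fact that in a quasitopological group every left translation $x\mapsto gx$ is a homeomorphism (its inverse is left translation by $g^{-1}$, and each is continuous by separate continuity of multiplication). Thus, if $\{e\}$ is closed, then for each $g\in G$ the singleton $\{g\}$ is the image of the closed set $\{e\}$ under a homeomorphism, hence closed; so all singletons are closed and $G$ is $T_1$.

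I do not expect a genuine obstacle here: the only real content is the bookkeeping of two facts about quasitopological groups, namely Lemma \ref{topindist} and the fact that translations are homeomorphisms. As the remark following Lemma \ref{topindist} already observes, the equivalence $T_0\Leftrightarrow T_1$ fails for arbitrary $T_0$ non-$T_1$ topological spaces, so it is worth stressing in the write-up precisely where the quasitopological group structure enters. The one point to be careful about is to use only \emph{separate} continuity of multiplication (equivalently, that translations are homeomorphisms) and never joint continuity, since a quasitopological group need not have the latter; all three steps above respect this restriction.
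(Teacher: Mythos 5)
Your argument is correct and follows exactly the paper's route: the cycle $(2)\Rightarrow(1)\Rightarrow(3)\Rightarrow(2)$, with $(1)\Rightarrow(3)$ via Lemma \ref{topindist} and $(3)\Rightarrow(2)$ via the homogeneity of $G$ (i.e.\ translations being homeomorphisms), which is precisely what the paper invokes. The extra care you take to use only separate continuity is a nice touch but does not change the substance.
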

\begin{proof}
3. $\Rightarrow$ 2. follows from the homogeneity of $G$ and 2. $\Rightarrow$ 1. is clear. 1. $\Rightarrow$ 3. If $G$ is $T_0$, then $\overline{e}=e$ by Lemma \ref{topindist}. Thus the trivial subgroup is closed in $G$.
\end{proof}
Recall the \textbf{Kolmogorov quotient} (or $T_0$-identification space) of a topological space $G$ is the quotient space $G/\sim$ where $g\sim h$ iff $g$ and $h$ are topologically indistinguishable. Every open neighborhood in $G$ is saturated with respect to the (open and closed) quotient map $q:G\to G/\sim$. In the case that $G$ is a quasitopological group, two elements $g,h$ have the same closure $\overline{g}=\overline{h}$ iff they are topologically indistinguishable. Thus the Kolmogorov quotient of $G$ is the $T_1$ quotient group $G/\overline{e}$ and the group projection $G\to G/\overline{e}$ is the identification map. 

We apply to quasitopological groups the well-known fact that many topological properties of a space translate to corresponding properties of the Kolmogorov quotient and vice versa \cite{Kur}.

\begin{lemma}
\label{topropstokolmquotient} Let $G$ be a quasitopological group with
identity $e$. Then $G$ is a topological group iff $G/\overline{e}$ is a
topological group. Additionally, $G$ is compact (lindel\"{o}f, first countable,
pseudometrizable, regular, normal, paracompact) iff $G/\overline{e}$ is
compact (resp. lindel\"{o}f, first countable, pseudometrizable, regular, normal,
paracompact).
\end{lemma}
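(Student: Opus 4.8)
The plan is to isolate the one structural fact that makes every clause go through. By Lemma \ref{topindist}, if $g\in G$ and $h\in\overline{g}$ then $h$ lies in every open neighborhood of $g$; hence every open subset of $G$ is saturated with respect to the quotient map $q\colon G\to G/\overline{e}$. Consequently $q$ is an open map (and also a closed map, since complements of saturated sets are saturated), and $A\mapsto q^{-1}(A)$ is an order isomorphism between the lattice of open sets of $G/\overline{e}$ and the lattice of open sets of $G$, with inverse $U\mapsto q(U)$; the same holds for closed sets. In short, $G$ and $G/\overline{e}$ have ``the same'' open-set lattice, and the bulk of the proof is simply transporting properties across this isomorphism. (That $G/\overline{e}$ is again a quasitopological group and that $q$ is the identification homomorphism is already recorded in the discussion preceding the lemma.)

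First I would treat the topological-group claim. Since $\overline{e}$ is a normal subgroup, $q$ is a group homomorphism. Because $q$ is an open continuous surjection, so is $q\times q\colon G\times G\to (G/\overline{e})\times(G/\overline{e})$, and therefore $q\times q$ is a quotient map. Writing $m,\overline{m}$ for the two multiplications, $q\circ m=\overline{m}\circ(q\times q)$. If $G$ is a topological group, then $q\circ m$ is continuous, so $\overline{m}$ is continuous by the quotient property of $q\times q$. Conversely, if $\overline{m}$ is continuous then $q\circ m=\overline{m}\circ(q\times q)$ is continuous, and for any open $U\subseteq G$ we have $U=q^{-1}(q(U))$ with $q(U)$ open, so $m^{-1}(U)=(q\circ m)^{-1}(q(U))$ is open; thus $m$ is continuous. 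Inversion is continuous in every quasitopological group, so nothing further is required.

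For the list of properties I would go through them one at a time, all via the same mechanism. Compactness, the Lindel\"{o}f property, and paracompactness are phrased purely in terms of open covers and open refinements, hence transfer verbatim across the lattice isomorphism $q^{-1}$. For regularity and normality, given a point (resp.\ closed set) and a disjoint closed set in either space, one pushes or pulls everything through $q$, using that $q$ is open and closed, that all closed sets of $G$ are saturated, and that disjoint saturated sets have disjoint images; so a separation upstairs yields one downstairs and vice versa. First countability follows from homogeneity (it suffices to check at $e$ and at $\overline{e}=q(e)$) together with the fact that a countable family $\{U_n\}$ of open sets is a neighborhood base at $e$ if and only if $\{q(U_n)\}$ is a neighborhood base at $\overline{e}$. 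For pseudometrizability: $G/\overline{e}$ is $T_1$, so if pseudometrizable it is metrizable, and pulling a compatible metric $d$ back by $\rho(g,h)=d(q(g),q(h))$ gives a pseudometric on $G$ whose balls are exactly the $q$-preimages of $d$-balls, hence generate the topology of $G$; conversely a compatible pseudometric $\rho$ on $G$ vanishes on each topological-indistinguishability class, so it descends to a metric on $G/\overline{e}$ compatible with the quotient topology by the same computation with balls.

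I do not expect a genuinely hard step; the work is entirely in care. The point most worth attention is that $q\times q$ is a quotient map, which is precisely why one needs $q$ to be \emph{open} rather than merely a quotient map, and this is exactly where the saturation of open sets (Lemma \ref{topindist}) is used. Secondarily, one should fix a convention for ``regular,'' ``normal,'' and ``paracompact'' (the covering/separation formulations, not presupposing $T_1$) and confirm the transfer arguments never silently invoke a separation axiom $G$ may lack; and for pseudometrizability one must check the transported (pseudo)metric induces exactly the given topology rather than a coarser one.
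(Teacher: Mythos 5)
Your proposal is correct and follows essentially the same route as the paper: the converse direction for the topological-group claim rests on exactly the same observation that every open set of $G$ is saturated (so $U=q^{-1}(V)$ and $\mu^{-1}(U)=(q\times q)^{-1}(\nu^{-1}(V))$), and your forward direction is just the standard proof of the general fact the paper cites. The paper dismisses the remaining properties as "easily verifiable for Kolmogorov quotients of general topological spaces," so your transfer arguments via the open/closed lattice isomorphism simply supply the details it omits, and they are all sound.
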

\begin{proof}
We prove only the first statement since the additional statements are easily verifiable for Kolmogorov quotients of general topological spaces. In general, quotient groups of a topological group (with the quotient topology) are topological groups. Thus $G/\overline{e}$ is a topological group whenever $G$ is. Suppose $G/\overline{e}$ is a topological group and $q:G\to G/\overline{e}$ is the canonical quotient map. It suffices to check that multiplication $\mu:G\times G\to G$ is continuous. If $U\subseteq G$ is open, then $U=q^{-1}(V)$ for some open set $V\subseteq G/\overline{e}$. Since multiplication $\nu:G/\overline{e}\times G/\overline{e}\to G/\overline{e}$ is continuous, 
$(q\times q)^{-1}(\nu^{-1}(U))=\mu^{-1}(q^{-1}(V))=\mu^{-1}(U)$ is open in $G\times G$.
\end{proof}

We now provide a characterization of the $T_1$ axiom in fundamental groups
using a relative version of the notion of ``homotopically path-Hausdorff" introduced in \cite{FRVZ11}. See also \cite{Virk2}. 

\begin{definition}
\emph{Let $C$ be a subset of $\pi_{1}(X,x_0)$. The space $X$ is \textbf{
homotopically path-Hausdorff relative to} $C$ if for every pair of paths $\alpha,\beta\in\mathcal{P}(X,x_0)$ such that $\alpha(1)=\beta(1)$ and $[\alpha\cdot\beta^{-}]\notin C$, there is a partition $0=t_0<t_1<t_2<...<t_n=1$ and a sequence of open sets $U_1,U_2,...,U_n$ with $\alpha([t_{i-1},t_i])\subset U_i$, such that if $\gamma:[0,1]\to X$ is another path satisfying $\gamma([t_{i-1},t_i])\subset U_i$ for $1\leq i\leq n$ and $\gamma(t_i)=\alpha(t_i)$ for $0\leq i\leq n$, then $[\gamma\cdot\beta^{-}]\notin C$. We say $X$ is \textbf{homotopically path-Hausdorff} if it is homotopically path-Hausdorff relative to the trivial subgroup $C=1$. }
\end{definition}

\begin{lemma}
\label{equivalence1} Suppose $C$ is a subset of $\pi_{1}(X,x_0)$ and $C\neq \pionex$. If $C$ is closed in $\piqtopx$, then $X$ is homotopically
path-Hausdorff relative to $C$. If $X$ is locally path connected and
homotopically path-Hausdorff relative to $C$, then $C$ is closed in $\piqtopx$.
\end{lemma}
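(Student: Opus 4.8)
The plan is to prove the two implications separately, translating between the "closed in $\piqtopx$" condition and the "homotopically path-Hausdorff relative to $C$" condition by unwinding the definition of the quotient topology on $\piqtopx$. The key observation throughout is that a subset $C \subseteq \pionex$ is closed in $\piqtopx$ if and only if $\pi^{-1}(C) = \{\alpha \in \oxx : [\alpha] \in C\}$ is closed in $\oxx$, and the latter is best detected using the explicit basis for the compact-open topology described in Section 2 (neighborhoods of the form $\bigcap_{i=1}^{n}\langle [t_{i-1},t_i], U_i\rangle$). I would also use the homogeneity remark: since $\piqtopx$ is a quasitopological group, $C$ is closed iff the translate $C\cdot[\beta]^{-1}$ is closed, which lets me reduce statements about $[\alpha\cdot\beta^-]\notin C$ to statements about a single loop class lying outside $C$.

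For the first implication, suppose $C$ is closed in $\piqtopx$ with $C \ne \pionex$, and let $\alpha,\beta \in \pxxo$ with $\alpha(1)=\beta(1)$ and $[\alpha\cdot\beta^-]\notin C$. Then the loop $\alpha\cdot\beta^-$ lies in the open set $\oxx \setminus \pi^{-1}(C)$, so there is a basic neighborhood $W = \bigcap_{i}\langle[s_{i-1},s_i], V_i\rangle$ of $\alpha\cdot\beta^-$ contained in it. First I would arrange the partition to be compatible with the midpoint $1/2$ where the concatenation is broken, and extract from the part of $W$ covering $[0,1/2]$ a partition $0=t_0<\dots<t_n=1$ of the domain of $\alpha$ together with open sets $U_i$ with $\alpha([t_{i-1},t_i])\subseteq U_i$. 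The point is that any path $\gamma$ with $\gamma([t_{i-1},t_i])\subseteq U_i$ and $\gamma(t_i)=\alpha(t_i)$ yields a loop $\gamma\cdot\beta^-$ still lying in $W$ (the constraints on the $\beta^-$ half are unchanged, and the matching endpoints guarantee continuity), hence $\gamma\cdot\beta^- \notin \pi^{-1}(C)$, i.e. $[\gamma\cdot\beta^-]\notin C$. This direction does not use local path connectedness.

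For the converse, assume $X$ is locally path connected and homotopically path-Hausdorff relative to $C$; I want to show $\pi^{-1}(C)$ is closed, equivalently that its complement is open. Let $\delta \in \oxx$ be a loop with $[\delta]\notin C$. I would write $\delta$ in a suggestive form as $\alpha\cdot\beta^-$ — most simply taking $\beta$ to be the constant path, so $\delta = \alpha$ up to reparametrization (here it is cleanest to just apply the definition directly with $\beta = c_{x_0}$, or more precisely with $\alpha,\beta$ the two halves of $\delta$) — and apply the homotopically path-Hausdorff condition to get a partition and open sets $U_1,\dots,U_n$. Using local path connectedness, shrink each $U_i$ to a path-connected open set $U_i'$ still containing $\alpha([t_{i-1},t_i])$, and consider the basic neighborhood $N = \bigcap_i \langle[t_{i-1},t_i], U_i'\rangle$ of $\delta$ in $\oxx$. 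The crux is to show $N \subseteq \oxx\setminus\pi^{-1}(C)$: given any loop $\gamma \in N$, the values $\gamma(t_i)$ need not equal $\alpha(t_i)$, so $\gamma$ is not literally of the form allowed in the definition. The fix, and the main obstacle, is to connect $\gamma(t_i)$ to $\alpha(t_i)$ by a short path inside the path-connected set $U_i'\cap U_{i+1}'$ (nonempty since both contain the point $\alpha(t_i)$), splice these correction paths in, and verify that the resulting path $\gamma'$ is homotopic rel endpoints to $\gamma$ while satisfying $\gamma'([t_{i-1},t_i])\subseteq U_i'\subseteq U_i$ and $\gamma'(t_i)=\alpha(t_i)$; then the definition gives $[\gamma\cdot\beta^-] = [\gamma'\cdot\beta^-]\notin C$. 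Carefully checking that the homotopy stays within the prescribed sets — so that the rerouting genuinely does not change the homotopy class relative to the constraints — and bookkeeping the concatenations is where the real work lies; the rest is routine manipulation of the compact-open topology.
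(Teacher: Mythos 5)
Your first direction is fine and is essentially the paper's argument: pull $C$ back to $\Omega(X,x_0)$, take a basic compact-open neighborhood of $\alpha\cdot\beta^{-}$ missing $\pi^{-1}(C)$ whose partition is compatible with the break point $1/2$, and read off from its first half the partition and open sets for $\alpha$; as you note, local path connectedness is not needed there.

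The second direction has a genuine gap at exactly the point you flag as ``where the real work lies.'' After shrinking to path-connected sets $U_i'$ you form the neighborhood $N=\bigcap_{i}\langle [t_{i-1},t_i],U_i'\rangle$ and claim that for $\gamma\in N$ you can join $\gamma(t_i)$ to $\alpha(t_i)$ by a path ``inside the path-connected set $U_i'\cap U_{i+1}'$.'' But the intersection of two path-connected open sets need not be path connected (your parenthetical only establishes that it is nonempty), so $\gamma(t_i)$ may lie in a different path component of $U_i'\cap U_{i+1}'$ than $\alpha(t_i)$. The correction path genuinely must lie in the intersection, since it is traversed at the end of the $i$-th block and, reversed, at the start of the $(i+1)$-st; using two different correction paths, one in $U_i'$ and one in $U_{i+1}'$, would insert a possibly essential loop at $\gamma(t_i)$ and change the homotopy class. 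The missing idea --- which is how the paper handles this --- is to choose, for each interior partition point, a path-connected neighborhood $W_i$ of $\alpha(t_i)$ with $W_i\subseteq U_i'\cap U_{i+1}'$ (possible by local path connectedness) and to add the conditions $\langle \{t_i\},W_i\rangle$ to the defining neighborhood, i.e.\ take $N=\bigcap_{i}\langle [t_{i-1},t_i],U_i'\rangle\cap\bigcap_{i}\langle \{t_i\},W_i\rangle$. This forces $\gamma(t_i)\in W_i$, so the correction paths exist where you need them, and the rest of your splicing argument then goes through as described.
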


\begin{proof}
If $C$ is closed in $\piqtopx$, then $\pi^{-1}(C)$ is closed in $\Omega(X,x_0)$ since $\pi:\Omega(X,x_0)\to \piqtopx$ is continuous. Suppose $\alpha,\beta\in\pxxo$ such that $\alpha(1)=\beta(1)$ and $[\alpha\cdot\beta^{-}]\notin C$. Since $\alpha\cdot\beta^{-}\notin\pi^{-1}(C)$, there is a basic open neighborhood of the form $\mathcal{U}=\bigcap_{j=1}^{n}\left\langle \left[\frac{j-1}{n}\frac{j}{n}\right],U_j\right\rangle$ such that $\mathcal{U}\cap \pi^{-1}(C)=\emptyset$. We may assume $n$ is even. Now consider the partition given by $t_j=\frac{2j}{n}$ for $0\leq j\leq n/2$. We have $\alpha([t_{j-1},t_j])\subseteq U_j$ for $1\leq j\leq n/2$. Suppose $\gamma\in \pxxo$ is a path such that $\gamma([t_{j-1},t_j])\subseteq U_j$ for $1\leq j\leq n/2$ and $\gamma(t_j)=\alpha(t_j)$ for $0\leq j\leq n/2$. Clearly $\gamma\cdot \beta^{-}\in \mathcal{U}$ and thus $[\gamma\cdot\beta^{-}]\notin C$. Therefore $X$ is homotopy path-Hausdorff relative to $C$.

Suppose $X$ is locally path connected and homotopically path-Hausdorff relative to $C$. Since $\pi:\Omega(X,x_0)\to\piqtopx$ is quotient, it suffices to show $\pi^{-1}(C)$ is closed in $\Omega(X,x_0)$. Let $\alpha\in \Omega(X,x_0)$ such that $[\alpha]\notin C$ and let $\beta$ be the constant path at $x_0$ so that $[\alpha\cdot\beta^{-}]\notin C$. By assumption, there is a partition $0=t_0<t_1<t_2<...<t_n=1$ and a sequence of open sets $V_1,V_2,...,V_n$ with $\alpha([t_{i-1},t_i])\subset V_i$ such that if $\zeta\in\pxxo$ is another path satisfying $\zeta([t_{i-1},t_i])\subset V_i$ and $\zeta(t_i)=\alpha(t_i)$, then $[\zeta]=[\zeta\cdot\beta^{-}]\notin C$. Since $X$ is locally path connected, we may assume each $V_i$ is path connected. For $i=1,...,n-1$, find a path connected neighborhood $W_i$ of $\alpha(t_i)$ contained in $V_{i}\cap V_{i+1}$. Now
\[\mathcal{V}=\bigcap_{i=1}^{n-1}\langle [t_{i-1},t_i],V_i\rangle\cap \bigcap_{i=1}^{n-1}\langle \{t_i\},W_i\rangle\] is an open neighborhood of $\alpha$ in $\Omega(X,x_0)$. We claim $\mathcal{V}\cap \pi^{-1}(C)=\emptyset$. Suppose $\delta\in \mathcal{V}$. Since $\delta(t_i)\in W_i$ for $i=1,...,n-1$, find paths $\epsilon_i:\ui\to W_i$ from $\delta(t_i)$ to $\alpha(t_i)$. Let $\delta_i$ denote the path given by restricting $\delta$ to $[t_{i-1},t_i]$. Note 
\begin{eqnarray*}
\delta &\simeq& \delta_1\cdot\delta_2\cdots \delta_n \\
&\simeq& (\delta_1\cdot \epsilon_1)\cdot(\epsilon_{1}^{-}\cdot\delta_2 \cdot \epsilon_1)\cdot (\epsilon_{2}^{-}\cdot\delta_3 \cdot \epsilon_2)\cdots (\epsilon_{n-2}^{-}\cdot\delta_{n-1} \cdot \epsilon_{n-1}^{-})\cdot(\epsilon_{n-1}\cdot\delta_n)
\end{eqnarray*}
Let $\zeta_i$ be the $i$-th factor of this last concatenation and define a path $\zeta$ to be $\zeta_i$ on $[t_{i-1},t_i]$ and note $[\zeta]=[\delta]$. Since $\zeta([t_{i-1},t_i])\subseteq V_i$ and $\zeta(t_i)=\alpha(t_i)$, we have $[\zeta]\notin C$. Thus $[\delta]=[\zeta]\notin C$.
\end{proof}
The following theorem now follows directly from Lemma \ref{tzero} and Lemma \ref{equivalence1} (when $C=1$ is the trivial group).
\begin{theorem}\label{t1ishompathhaus}
A locally path connected space $X$ is homotopically path-Hausdorff iff $\piqtopx$ is $T_1$.
\end{theorem}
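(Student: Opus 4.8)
The plan is to obtain the theorem by specializing Lemma~\ref{equivalence1} to the trivial subgroup $C=1$ and combining it with the characterization of the $T_1$ axiom in Corollary~\ref{tzero}. Recall that, by definition, ``$X$ is homotopically path-Hausdorff'' means exactly ``$X$ is homotopically path-Hausdorff relative to $C=1$,'' so no translation of hypotheses is needed; the only point to watch is the standing assumption $C\neq \pionex$ in Lemma~\ref{equivalence1}, which here reads $\pionex\neq 1$.

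First I would dispose of the degenerate case $\pionex=1$. In that case $\piqtopx$ is the one-point group, which is trivially $T_1$, and $X$ is homotopically path-Hausdorff vacuously (there is no pair of paths $\alpha,\beta\in\pxxo$ with $\alpha(1)=\beta(1)$ and $[\alpha\cdot\beta^{-}]\notin 1$), so both sides of the equivalence hold and there is nothing to prove. From now on assume $\pionex\neq 1$, so that $C=1$ satisfies $C\neq\pionex$ and Lemma~\ref{equivalence1} applies with $C=1$.

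For the forward implication, suppose $\piqtopx$ is $T_1$. By Corollary~\ref{tzero} (equivalence of (2) and (3)) the trivial subgroup $1$ is closed in $\piqtopx$, so the first assertion of Lemma~\ref{equivalence1}, applied with $C=1$, yields that $X$ is homotopically path-Hausdorff relative to $1$, i.e.\ homotopically path-Hausdorff; note this direction does not use local path connectedness. For the converse, suppose $X$ is locally path connected and homotopically path-Hausdorff, i.e.\ homotopically path-Hausdorff relative to $C=1$. The second assertion of Lemma~\ref{equivalence1} then gives that $1$ is closed in $\piqtopx$, and Corollary~\ref{tzero} (equivalence of (3) and (2)) says $\piqtopx$ is $T_1$.

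Since the substantive work — constructing a basic open neighborhood in $\Omega(X,x_0)$ from a partition together with a suitable system of open sets, and conversely extracting such combinatorial data from an open neighborhood using local path connectedness — is already carried out inside Lemma~\ref{equivalence1}, I do not expect a genuine obstacle. The only thing requiring care is the bookkeeping: matching the definition of homotopically path-Hausdorff (relative to the trivial subgroup) with clause (3) of Corollary~\ref{tzero}, and noting the asymmetry that the implication ``$T_1 \Rightarrow$ homotopically path-Hausdorff'' holds for all spaces while the reverse direction needs local path connectedness, together with the minor $\pionex=1$ caveat above.
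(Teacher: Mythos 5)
Your proposal is correct and follows exactly the paper's route: the paper derives the theorem directly from Corollary~\ref{tzero} and Lemma~\ref{equivalence1} with $C=1$. Your extra handling of the degenerate case $\pionex=1$ (needed because of the hypothesis $C\neq\pionex$ in Lemma~\ref{equivalence1}) is a reasonable bit of care that the paper leaves implicit, but it does not change the argument.
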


The connection between the $T_1$ axiom in $\piqtopx$ and the homotopically path-Hausdorff property motivates the following application of Lemma \ref{equivalence1} to the existence of generalized coverings in the sense of \cite{FZ07}.

Suppose $X$ is locally path connected and $H$ is a fixed subgroup of $\pi_{1}(X,x_0)$. Define an equivalence relation 
$\sim$ on $\mathcal{P}(X,x_0)$ by $\alpha\sim \beta$ iff $\alpha(1)=\beta(1)$ and $[\alpha\cdot\beta^{-}]\in H$ The equivalence class
of $\alpha$ is denoted $[\alpha]_{H}$. Let $\widetilde{X}_{H}=\mathcal{P}(X,x_0)/\sim$ be the set of equivalence classes with the \textbf{whisker topology} (or sometimes called the standard topology). A basis for the whisker topology consists of basic neighborhoods 
\begin{equation*}
B_H([\alpha]_{H},U)=\{[\alpha\cdot\epsilon]_{H}|\epsilon([0,1])\subseteq U\}
\end{equation*}
where $U\subseteq X$ is open and $\alpha\in \mathcal{P}(X,x_0)$. We take $\widetilde{x}_H=[c_{x_0}]_H$ to be the basepoint of $\widetilde{X}_H$. Note that
if $[\beta]_H\in B_H([\alpha]_{H},U)$, then $B_H([\alpha]_{H},U)=B_H([\beta]_{H},U)$.

Proofs of the following statements can be found in \cite{FZ07}. The endpoint projection $p_H:\widetilde{X}_{H}\to X$, $p_H([\alpha]_H)=\alpha(1)$
is known to be a continuous open surjection. Every path $\alpha:[0,1]\to X$ admits a continuous ``standard lift" $\widetilde{\alpha}^{st}:[0,1]\to \widetilde{X}_{H}$ given by $\widetilde{\alpha}
^{st}(t)=[\alpha_t]_H$ where $\alpha_t(s)=\alpha(ts)$. Recall that $p_H:\widetilde{X}_{H}\to X$ has the \textbf{unique path lifting property} if whenever $\alpha,\beta:[0,1]\to\widetilde{X}_{H}$ are paths such that $\alpha(0)=\beta(0)$ and $p_H\circ \alpha=p_H\circ \beta$, then $\alpha=\beta$. Whenever $p_H$ has the unique path lifting property, it is a generalized covering map in the sense of \cite{FZ07}.

A number of authors have identified conditions sufficient to conclude that $p_H$ has unique path lifting. For instance, if $H$ is a certain invariant subgroup (the intersection of Spanier groups or the kernel of the first shape map), then $p_H$ has unique path lifting. Unique path lifting for these same invariant subgroups follows from the approach of \cite{BDLM08}. Fisher and Zastrow show in \cite{FZ13} that if $H$ is an open (not necessarily invariant) subgroup of $\piqtopx$, then $p_H$ is a semicovering in the sense of \cite{Brazsemi} and has unique path lifting. Finally, Theorem 2.9 of \cite{FRVZ11} states that if $X$ is homotopically path-Hausdorff, then $p_H$ has the unique path lifting property when $H=1$. The following theorem systematically generalizes all of the above cases of unique path lifting. The proof is essentially the same as that of Theorem 2.9 of \cite{FRVZ11}, however, we include it for completion.

\begin{theorem}\label{closed}
If $X$ is locally path connected and $H$ is a closed subgroup in $\piqtopx$, then $p_H:\widetilde{X}_{H}\to X$ has the unique path lifting property.
\end{theorem}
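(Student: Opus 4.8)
The proof will follow the scheme of Theorem~2.9 of \cite{FRVZ11}, relativised to $H$. We may assume $H\neq\pionex$, since otherwise $p_H\colon\txh\to X$ is a homeomorphism (as $X$ is locally path connected) and the conclusion is trivial. With this in hand, Lemma~\ref{equivalence1} shows that $X$ is \emph{homotopically path-Hausdorff relative to $H$}; this is the only place the hypothesis ``$H$ closed'' is used.

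Next I would reduce the statement twice. First, unique path lifting for $p_H$ follows once we know that, for every $\gamma\in\pxxo$, any lift $F\colon\ui\to\txh$ of $\gamma$ with $F(0)=\widetilde{x}_H$ coincides with the standard lift $\widetilde{\gamma}^{st}$: indeed, given two lifts $\alpha,\beta\colon\ui\to\txh$ of a path $\eta$ in $X$ with $\alpha(0)=\beta(0)=[\delta]_H$, the concatenations $\widetilde{\delta}^{st}\cdot\alpha$ and $\widetilde{\delta}^{st}\cdot\beta$ are lifts of $\delta\cdot\eta\in\pxxo$ starting at $\widetilde{x}_H$, hence both equal $\widetilde{\delta\cdot\eta}^{st}$, so $\alpha=\beta$. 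Second, since restricting such an $F$ to $[0,t]$ gives a lift of $\gamma_t$ (where $\gamma_t(s)=\gamma(ts)$) starting at $\widetilde{x}_H$, it suffices to prove that $F(1)=[\gamma]_H$.

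The heart of the argument is the following. Suppose $F$ is a lift of $\gamma\in\pxxo$ with $F(0)=\widetilde{x}_H$ but $F(1)\neq[\gamma]_H$, and choose $\sigma\in\pxxo$ with $[\sigma]_H=F(1)$; then $\sigma(1)=\gamma(1)$ and $[\gamma\cdot\sigma^-]\notin H$. The key move is to feed the \emph{ordered} pair $(\gamma,\sigma)$ into the homotopically-path-Hausdorff-relative-to-$H$ property, so that the resulting partition $0=r_0<\cdots<r_N=1$ and open sets $O_1,\dots,O_N$ enclose $\gamma$ itself, i.e.\ $\gamma([r_{i-1},r_i])\subseteq O_i$, with the property that any path $\xi$ satisfying $\xi([r_{i-1},r_i])\subseteq O_i$ and $\xi(r_i)=\gamma(r_i)$ for all $i$ has $[\xi\cdot\sigma^-]\notin H$. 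Since $p_H\circ F=\gamma$, the lift $F$ will then deliver a representative of $F(1)$ lying inside this tube, and that will be the contradiction. Concretely: by continuity of $F$ and a Lebesgue-number argument, refine the partition to $0=r'_0<\cdots<r'_P=1$ with $\{r_i\}\subseteq\{r'_p\}$ so that $F([r'_{p-1},r'_p])\subseteq B_H(F(r'_{p-1}),O_{i(p)})$, where $[r'_{p-1},r'_p]\subseteq[r_{i(p)-1},r_{i(p)}]$; an induction along the partition then produces paths $\nu_p$ with image in $O_{i(p)}$, running from $\gamma(r'_{p-1})$ to $\gamma(r'_p)$, with $F(r'_p)=[\nu_1\cdots\nu_p]_H$. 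Setting $\xi:=\nu_1\cdots\nu_P$, parametrised so that $\xi$ traverses $\nu_p$ on $[r'_{p-1},r'_p]$, gives $\xi(r'_p)=\gamma(r'_p)$, $\xi([r'_{p-1},r'_p])\subseteq O_{i(p)}$, and $[\xi]_H=F(1)=[\sigma]_H$, so $[\xi\cdot\sigma^-]\in H$. Since a witness for homotopically path-Hausdorff relative to $H$ remains a witness after refining its partition (with the open sets sub-assigned as $O_{i(p)}$) — the refined hypothesis on a test path implies the original one — the path $\xi$ contradicts the conclusion $[\xi\cdot\sigma^-]\notin H$. Hence $F(1)=[\gamma]_H$, which completes the proof.

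I expect the main obstacle to be the combinatorial bookkeeping in the last step: carrying out the Lebesgue-number refinement so that the partition $\{r'_p\}$ simultaneously refines the one returned by the homotopically-path-Hausdorff-relative-to-$H$ property and is fine enough for $F$ to map each $[r'_{p-1},r'_p]$ into the correct basic neighbourhood $B_H(F(r'_{p-1}),O_{i(p)})$, and then verifying that the sampled representative $\xi$ of $F(1)$ genuinely passes through $\gamma(r'_p)$ at every node while staying inside $O_{i(p)}$. This is exactly the matching between the open-set data coming from $X$ and that coming from the whisker topology, and it is where the argument of \cite{FRVZ11} concentrates its care; the remaining ingredients (the two reductions, the stability of the witness under refinement, and the trivial case $H=\pionex$) are routine.
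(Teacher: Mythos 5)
Your proposal is correct and follows essentially the same route as the paper's proof: reduce via Lemma \ref{equivalence1} to the homotopically path-Hausdorff-relative-to-$H$ property, then use whisker-topology basic neighborhoods and a subdivision of the lift to sample a path $\xi$ that stays in the prescribed tube, agrees with $\gamma$ at the nodes, and satisfies $[\xi\cdot\sigma^{-}]\in H$, contradicting that property. The only differences are cosmetic: you argue by contradiction against the specific witness partition (adding the refinement-stability remark) rather than negating the property over an arbitrary partition, and you spell out the standard reductions and the trivial case $H=\pi_1(X,x_0)$ that the paper leaves implicit.
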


\begin{proof}
According to Lemma \ref{equivalence1}, it suffices to replace the condition that $H$ is closed in $\piqtopx$ with the condition that $X$ is homotopy path-Hausdorff relative to $H$. Suppose $\alpha\in\pxxo$ is a path such that there is a lift $\beta:\ui\to \widetilde{X}_{H}$, $\beta(t)=[\beta_t]_{H}$ of $\alpha$ such that $\beta(0)=\widetilde{\alpha}^{st}(0)=\widetilde{x}_{H}$ but $\beta(1)=[\beta_1]_{H}\neq [\alpha]_{H}= \widetilde{\alpha}^{st}(1)$. We check that $X$ is not homotopically path-Hausdorff relative to $H$.

Note $[\alpha\cdot \beta_{1}^{-}]\notin H$ and consider any partition $0=t_0<t_1<\dots <t_n=1$ and path connected open sets $U_1,...,U_n$ such that $\alpha([t_{i-1},t_i])\subset U_i$. Consider a fixed value of $i$ and observe that $B_H([\beta_t]_{H},U_i)$ is an open neighborhood of $\beta(t)$ for each $t\in [t_{i-1},t_i]$. Therefore there is a subdivision $t_{i-1}=s_0<s_1<\dots <s_m=t_{i}$ such that $\beta([s_{j-1},s_j])\subseteq B([\beta_{s_{j-1}}]_{H},U_i)$ for each $j=1,...,m$. In particular, there is a path $\delta_{j}:\ui\to U_i$ from $\alpha(s_{j-1})$ to $\alpha(s_j)$ such that $[\beta_{s_{j-1}}\cdot \delta_j]_{H}=[\beta_{s_{j}}]_H$. 

The concatenation $\gamma_{i}=\delta_1\cdot\delta_2\cdots\cdot\delta_m$ is a path in $U_i$ from $\alpha(t_{i-1})$ to $\alpha(t_i)$. Since $g_j=[\beta_{s_{j-1}}\cdot \delta_j\cdot\beta_{s_{j}}^{-}]\in H$ for each $j$, we have \[h_i=[\beta_{t_{i-1}}\cdot\gamma_{i} \cdot \beta_{t_i}^{-}]=g_1g_2\cdots g_m\in H.\] Let $\gamma$ be the path defined as $\gamma_i$ on $[t_{i-1},t_i]$. Then $\gamma([t_{i-1},t_i])\subseteq U_i$ and $\gamma(t_i)=\alpha(t_i)$. Note that $[\beta_{0}\cdot\gamma\cdot \beta_{1}^{-}]=h_1h_2\cdots h_n\in H$ and $[\beta_0]\in H$. Thus $[\gamma\cdot \beta_{1}^{-}]\in H$ showing that $X$ cannot be homotopy path-Hausdorff relative to $H$.
\end{proof}
Andreas Zastrow has announced that the converse of Theorem \ref{closed} does not hold for Peano continua in the case when $H=1$ (See \cite{VZ}).
\subsection{On $T_2$ and invariantly separated groups}
In general, a $T_1$ quasitopological group need not be $T_2$ (for instance, any infinite group with the cofinite topology). A second countable $T_2$ (but non-regular) space $X$ such that $\piqtopx$ is $T_1$ but not $T_2$ is constructed in \cite[Example 4.13]{Brazfretopgrp}. 

Recall a topological space $A$ is \textbf{totally separated} if whenever $a\neq b$, there is a clopen set $U$ containing $a$ but not $b$. Equivalently, every point is the intersection of clopen sets. The following is a stronger notion for groups.

\begin{definition}
\emph{A quasitopological group $G$ is \textbf{invariantly separated} if
for every non-identity element $g$, there is an open invariant subgroup $N$
such that $g\notin N$. Equivalently, the intersection of all open invariant
subgroups in $G$ is the trivial subgroup of $G$.}
\end{definition}
\begin{proposition}
If $G$ is a quasitopological group, then
\[\xymatrix{
G\text{ is invariantly separated} \ar@{=>}[r] &G\text{ is totally separated} \ar@{=>}[r]&G\text{ is }T_2.
}\]
\end{proposition}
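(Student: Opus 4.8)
The plan is to prove the two implications separately. The first will use the homogeneity of quasitopological groups together with the fact that open subgroups are automatically closed; the second is a routine point-set argument valid for arbitrary topological spaces.

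First I would record the auxiliary fact that every open subgroup $N$ of a quasitopological group $G$ is clopen. Indeed, since left translations in $G$ are homeomorphisms, each left coset $gN$ is open, so the complement $G\setminus N=\bigcup_{g\notin N}gN$ is a union of open sets and hence open; thus $N$ is closed. This is the only place the quasitopological hypothesis enters, and it uses only continuity of translations, not continuity of multiplication in both variables. Now suppose $G$ is invariantly separated and let $a\neq b$ in $G$. Then $a^{-1}b\neq e$, so by hypothesis there is an open invariant subgroup $N$ with $a^{-1}b\notin N$. By the auxiliary fact $N$ is clopen, hence so is $aN$, again because left translation by $a$ is a homeomorphism. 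Finally $a\in aN$ since $e\in N$, while $b\notin aN$ since $b\in aN$ would force $a^{-1}b\in N$. Thus $aN$ is a clopen set separating $a$ from $b$, so $G$ is totally separated. (The invariance of $N$ is not actually needed for this implication; it is simply what the definition of ``invariantly separated'' supplies.)

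For the second implication, suppose $G$ is totally separated and $a\neq b$. Choose a clopen set $U$ with $a\in U$ and $b\notin U$. Then $U$ and $G\setminus U$ are disjoint open sets containing $a$ and $b$ respectively, so $G$ is $T_2$. This step uses nothing about the group structure and applies to any topological space.

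I do not anticipate a serious obstacle: the whole argument is elementary once the ``open $\Rightarrow$ clopen'' fact for subgroups is in hand, and that fact is immediate from homogeneity. The only point meriting a moment's care is ensuring the separating clopen set is produced using only that each translation is a homeomorphism, since in a quasitopological group one cannot invoke joint continuity of multiplication; the argument above respects that restriction.
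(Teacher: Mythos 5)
Your proof is correct and follows essentially the same route as the paper: observe that open subgroups (hence their cosets) are clopen because the complement is a union of translates, then separate $a$ from $b$ by a coset of the open invariant subgroup missing $a^{-1}b$, and note that totally separated trivially implies $T_2$. The only cosmetic difference is your use of the left coset $aN$ where the paper uses $Nh$; the argument is the same.
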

\begin{proof}
Note that cosets of open subgroups in $G$ are both open and closed. If $G$ is invariantly separated and $g\neq h$ in $G$, then there is an open invariant subgroup $N\subseteq G$ such that $gh^{-1}\notin N$. Then the coset $Nh$ is a clopen subset of $h$ which does not contain $g$. Thus $G$ is totally separated. It is evident from the definition that every totally separated topological space is $T_2$.
\end{proof}
\begin{proposition}\label{productinvar}
If $G_{\lambda}$ is a family of invariantly separated quasitopological groups, then the product $\prod_{\lambda}G_{\lambda}$ is invariantly separated.
\end{proposition}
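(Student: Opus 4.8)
The plan is to reduce the statement to the invariant separation of a single factor by pulling back a suitable open invariant subgroup along a coordinate projection. First I would record the routine fact that $P := \prod_{\lambda} G_{\lambda}$, equipped with the product topology, is again a quasitopological group: inversion on $P$ is the product of the inversions $g_{\lambda} \mapsto g_{\lambda}^{-1}$, hence continuous, and each right or left translation on $P$ is the product of the corresponding translations on the factors, hence a homeomorphism. I would also note that each coordinate projection $\mathrm{pr}_{\lambda_0} : P \to G_{\lambda_0}$ is a continuous group homomorphism.

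The key observation is that preimages behave well: if $\phi : G \to H$ is any group homomorphism and $N$ is an invariant subgroup of $H$, then $\phi^{-1}(N)$ is an invariant subgroup of $G$, since $\phi(g n g^{-1}) = \phi(g)\phi(n)\phi(g)^{-1} \in N$ whenever $\phi(n) \in N$; and if in addition $\phi$ is continuous and $N$ is open, then $\phi^{-1}(N)$ is open. Applying this with $\phi = \mathrm{pr}_{\lambda_0}$, every open invariant subgroup of $G_{\lambda_0}$ pulls back to an open invariant subgroup of $P$.

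To finish, let $(g_{\lambda})_{\lambda} \in P$ be any non-identity element and choose an index $\lambda_0$ with $g_{\lambda_0} \neq e_{\lambda_0}$. Since $G_{\lambda_0}$ is invariantly separated, there is an open invariant subgroup $N \subseteq G_{\lambda_0}$ with $g_{\lambda_0} \notin N$. Then $\mathrm{pr}_{\lambda_0}^{-1}(N) = N \times \prod_{\lambda \neq \lambda_0} G_{\lambda}$ is an open invariant subgroup of $P$ that does not contain $(g_{\lambda})_{\lambda}$, because its $\lambda_0$-coordinate lies outside $N$. Hence the intersection of all open invariant subgroups of $P$ contains no non-identity element, i.e. $P$ is invariantly separated.

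I expect no serious obstacle here: the only points requiring (minor) care are checking that a product of quasitopological groups is a quasitopological group and that preimages of open invariant subgroups under continuous homomorphisms are again open invariant subgroups, both of which are elementary. It is perhaps worth emphasizing in the write-up that one does not need to separate all coordinates of $(g_{\lambda})_{\lambda}$ simultaneously; a single non-trivial coordinate already yields an open invariant subgroup of $P$ avoiding the element.
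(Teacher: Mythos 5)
Your argument is correct and complete: the paper states Proposition \ref{productinvar} without proof, and your approach---pulling back an open invariant subgroup of a single factor $G_{\lambda_0}$ along the continuous projection homomorphism $\mathrm{pr}_{\lambda_0}$---is exactly the standard argument the authors leave implicit. No gaps; the two lemmas you flag (products of quasitopological groups are quasitopological groups, and preimages of open invariant subgroups under continuous homomorphisms are open invariant subgroups) are indeed the only points needing verification, and you verify them correctly.
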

\begin{proposition}\label{injection}
\label{invarseparated} If $H,G$ are quasitopological groups such that $G$ is
invariantly separated (resp. totally separated, $T_2$), and $f:H\to G$ is a continuous monomorphism, then $H$ is invariantly separated (resp. totally separated, $T_2$).
\end{proposition}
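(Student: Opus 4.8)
The plan is to treat all three cases with a single pullback argument, using only that $f$ is continuous and injective (neither openness of $f$ nor the quotient property is needed). Fix distinct elements $h,h'\in H$; since $f$ is a monomorphism, $f(h)\neq f(h')$ in $G$, and in particular $f(g)\neq e$ whenever $g\neq e$ in $H$.

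I would first dispatch the totally separated and $T_2$ cases, which use only continuity and injectivity of $f$ and hold for continuous injections of arbitrary spaces. If $G$ is totally separated, pick a clopen set $U\subseteq G$ with $f(h)\in U$ and $f(h')\notin U$; then $f^{-1}(U)$ is clopen in $H$, contains $h$, and omits $h'$, so $H$ is totally separated. If $G$ is $T_2$, choose disjoint open sets $U,V\subseteq G$ with $f(h)\in U$ and $f(h')\in V$; then $f^{-1}(U)$ and $f^{-1}(V)$ are disjoint open neighborhoods of $h$ and $h'$, so $H$ is $T_2$.

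The invariantly separated case is the one in which the group structure is used. Let $g\neq e$ in $H$; by hypothesis there is an open invariant subgroup $N\subseteq G$ with $f(g)\notin N$. Then $f^{-1}(N)$ is open in $H$ by continuity of $f$, is a subgroup of $H$ since $f$ is a homomorphism, and is invariant because $f(bab^{-1})=f(b)f(a)f(b)^{-1}\in N$ for all $a\in f^{-1}(N)$ and $b\in H$; moreover $g\notin f^{-1}(N)$. Hence the intersection of all open invariant subgroups of $H$ is the trivial subgroup, so $H$ is invariantly separated.

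There is no serious obstacle here: the only point needing a moment's care is that $f^{-1}(N)$ is again an invariant subgroup, which is just the standard fact that preimages of normal subgroups under group homomorphisms are normal. It is worth recording that the assumption that $G$ and $H$ are quasitopological, rather than merely topological, plays no role beyond fixing the ambient setting.
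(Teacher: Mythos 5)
Your proof is correct and follows essentially the same route as the paper: pull back an open invariant subgroup $N$ of $G$ missing $f(g)$ to the open invariant subgroup $f^{-1}(N)$ of $H$, and observe that the totally separated and $T_2$ cases are purely topological facts about continuous injections. The only difference is that you spell out those two standard arguments where the paper simply cites them.
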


\begin{proof}
Suppose $h\in H$ is not the identity. Since $f(h)$ is not the identity in $G$, there is an open invariant subgroup $N$ in $G$ such that $f(h)\notin N$. Thus $f^{-1}(N)$ is an invariant subgroup of $H$, which is open by continuity of $f$, and $h\notin f^{-1}(N)$.

The proposition for totally separated and $T_2$ groups holds in the general category of topological spaces by standard arguments.
\end{proof}

To observe a familiar condition sufficient to know that $\piqtopx$ is invariantly separated, we recall some basic constructions from shape theory. We refer the reader to \cite{BF}\cite{MS82} for a more detailed treatment. The construction for fundamental groups of based spaces is addressed specifically in \cite{BF}.

Suppose $X$ is paracompact Hausdorff and $cov(X)$ is the directed (by refinement) set of pairs $(\scru,U_0)$ where $\scru$ is a locally finite open cover of $X$ and $U_0$ is a distinguished element of $\scru$ containing $x_0$. Given $(\scru,U_0)\in cov(X)$ let $N(\scru)$ be the abstract simplicial complex which is the nerve of $\scru$. In particular, $\scru$ is the vertex set of $U$ and the n vertices $U_1,...,U_n$ span an n-simplex iff $\bigcap_{i=1}^{n}U_i\neq \emptyset$. The geometric realization $|\nerveu|$ is a polyhedron and thus $\pi_{1}^{qtop}(|\nerveu|,U_0)$ is a discrete group.

Given a pair $(\scrv,V_0)$ which refines $(\scru,U_0)$, a simplicial map $p_{\scru\scrv}:|\nervev|\to |\nerveu|$ is constructed by sending a vertex $V\in \scrv$ to some $U\in\scru$ for which $V\subseteq U$ (in particular, $V_0$ is mapped to $U_0$) and extending linearly. The map $p_{\scru\scrv}$ is unique up to homotopy and thus induces a unique homomorphism $p_{\scru\scrv\ast}:\pi_1(|\nervev|,V_0)\to \pi_1(|\nerveu|,U_0)$. The inverse system \[(\pi_1(|\nerveu|,U_0),p_{\scru\scrv\ast},cov(X))\] of discrete groups is the \textbf{fundamental pro-group} and the limit $\check{\pi}_1(X,x_0)$ (topologized with the usual inverse limit topology) is the \textbf{first shape homotopy group}.

Given a partition of unity $\{\phi_{U}\}_{U\in \mathscr{U}}$ subordinated to $\scru$ such that $\phi_{U_0}(x_0)=1$, a map $p_{\scru}:X\to |\nerveu|$ is constructed by taking $\phi_{U}(x)$ (for $x\in U$, $U\in \scru$) to be the barycentric coordinate of $p_{\mathscr{U}}(x)$ corresponding to the vertex $U$. The induced continuous homomorphism $p_{\scru\ast}:\piqtopx\to \pi_{1}^{qtop}(|\nerveu|,U_0)$ satisfies $p_{\scru\ast}\circ p_{\scru\scrv\ast}=p_{\scrv\ast}$ whenever $(\scrv,V_0)$ refines $(\scru,U_0)$. Thus there is a canonical, continuous homomorphism $\psi:\piqtopx\to \shape$.
\begin{definition}\emph{
The space $X$ is $\mathbf{\pi_1}$\textbf{-shape injective} if $\psi:\pionex\to \shape$ is a monomorphism.}
\end{definition}

\begin{example}\label{t1butnotshapeinj}
\emph{
The authors of \cite{FRVZ11} construct a non-$\pi_1$-shape injective Peano continuum $Y'$ and provide a proof sketch that $Y'$ is homotopically path-Hausdorff (See \cite[Theorem 3.7]{FRVZ11}). Combining Theorem \ref{t1ishompathhaus} with this observation implies the existence of a non-$\pi_1$-shape injective Peano continuum $X$ for which $\piqtopx$ is $T_1$. In such a case, the elements of $\piqtopx$ cannot be distinguished by shape but can be distinguish topologically in $\piqtopx$.
}
\end{example}

\begin{proposition}\label{injectivetoinvsep}
If $X$ is $\pi_1$-shape injective, then $\piqtopx$ is invariantly separated.
\end{proposition}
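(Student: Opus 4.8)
The plan is to produce, directly from the shape construction, a family of open invariant subgroups of $\piqtopx$ whose intersection is the kernel of the shape map $\psi$, and then to invoke $\pi_1$-shape injectivity to conclude that this kernel — hence the intersection of \emph{all} open invariant subgroups of $\piqtopx$ — is trivial.

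First I would fix $(\scru,U_0)\in cov(X)$ and examine the induced homomorphism $p_{\scru\ast}\colon\piqtopx\to\pi_{1}^{qtop}(|\nerveu|,U_0)$. Since $|\nerveu|$ is a polyhedron, hence a locally path connected, semilocally simply connected space, Theorem \ref{discrete} tells us that $\pi_{1}^{qtop}(|\nerveu|,U_0)$ is discrete, so in particular the trivial subgroup is open there. Because $p_{\scru\ast}$ is continuous, its kernel $N_\scru:=\ker(p_{\scru\ast})$ is therefore an open subgroup of $\piqtopx$, and being the kernel of a homomorphism it is invariant.

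Next I would identify $\bigcap_{(\scru,U_0)\in cov(X)}N_\scru$ with $\ker(\psi)$. By construction $\psi\colon\piqtopx\to\shape$ is the map into the inverse limit of the groups $\pi_1(|\nerveu|,U_0)$, so for a loop $\gamma$ the element $\psi([\gamma])$ is the thread $\big(p_{\scru\ast}([\gamma])\big)_{(\scru,U_0)}$; thus $\psi([\gamma])$ is trivial exactly when $[\gamma]\in N_\scru$ for every $(\scru,U_0)$, i.e.\ $\ker(\psi)=\bigcap_{(\scru,U_0)}N_\scru$. Now invoke the hypothesis: if $X$ is $\pi_1$-shape injective then $\psi$ is a monomorphism, so this intersection is the trivial subgroup. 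Since the intersection of \emph{all} open invariant subgroups of $\piqtopx$ is contained in $\bigcap_{(\scru,U_0)}N_\scru=1$, it is trivial, and $\piqtopx$ is invariantly separated.

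There is no genuinely hard step here: once the shape map is unwound, the statement is a formal consequence of the fact that $\psi$ factors through continuous homomorphisms onto discrete groups, together with the definitions. The only input that is not purely formal is that the target groups $\pi_{1}^{qtop}(|\nerveu|,U_0)$ are discrete, i.e.\ Theorem \ref{discrete} applied to polyhedra. Alternatively, the same argument can be packaged through the product: each $\pi_1(|\nerveu|,U_0)$ is a discrete group, hence trivially invariantly separated; by Proposition \ref{productinvar} the product $\prod_{(\scru,U_0)}\pi_1(|\nerveu|,U_0)$ is invariantly separated; the inverse limit $\shape$ sits inside this product via a continuous monomorphism, so Proposition \ref{injection} makes $\shape$ invariantly separated; and applying Proposition \ref{injection} once more to the continuous monomorphism $\psi$ (using $\pi_1$-shape injectivity) gives the result for $\piqtopx$.
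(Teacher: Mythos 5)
Your proposal is correct, and the ``alternative packaging'' you give at the end is essentially verbatim the paper's proof (discrete nerve groups are invariantly separated, pass to the product via Proposition \ref{productinvar}, then pull back along continuous monomorphisms via Proposition \ref{injection}). Your first, more hands-on version --- exhibiting the open invariant subgroups explicitly as the kernels $\ker(p_{\scru\ast})$ and identifying their intersection with $\ker\psi$ --- is just an unwinding of the same argument and is also correct.
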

\begin{proof}
For $\scru\in cov(X)$, the group $G_{\scru}=\pi_{1}^{qtop}(|\nerveu|,U_0)$ is discrete. Every discrete group is invariantly separated and thus $G=\prod_{\scru}G_{\scru}$ is invariantly separated by Proposition  \ref{productinvar}. Since, by assumption, $\piqtopx$ continuously injects into $\shape$ and $\shape$ is a sub-topological group of $G$, we apply Proposition \ref{injection} to see that $\piqtopx$ is invariantly separated.
\end{proof}
\begin{corollary}\label{injectivetohausdorff}
If $X$ is $\pi_1$-shape injective, then $\piqtopx$ is $T_2$.
\end{corollary}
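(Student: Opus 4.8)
The plan is to read this off directly from the preceding results, so the proof will be essentially one line with a short justification. First I would apply Proposition~\ref{injectivetoinvsep}: the hypothesis that $X$ is $\pi_1$-shape injective gives that $\piqtopx$ is invariantly separated. Then I would invoke the chain of implications established above for quasitopological groups --- invariantly separated $\Rightarrow$ totally separated $\Rightarrow T_2$ --- whose proof rests only on the observation that cosets of open (invariant) subgroups are clopen, hence separate any pair of distinct points, together with the standard fact that a totally separated space is Hausdorff. Combining these yields that $\piqtopx$ is $T_2$.

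Alternatively, one can bypass the notion of invariant separation entirely: the canonical homomorphism $\psi:\piqtopx\to\shape$ is a continuous monomorphism when $X$ is $\pi_1$-shape injective, and $\shape$ --- being an inverse limit of discrete groups, and hence a subgroup of a product of Hausdorff topological groups --- is itself a Hausdorff topological group; Proposition~\ref{injection} then transfers the $T_2$ property back to $\piqtopx$. Since every step is an immediate citation of an already-proven statement, there is no genuine obstacle here; the corollary is a bookkeeping consequence of Proposition~\ref{injectivetoinvsep} and the separation hierarchy for quasitopological groups, recorded separately only because the $T_2$ conclusion is the one most likely to be quoted in applications.
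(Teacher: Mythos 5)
Your proposal is correct and matches the paper's (implicit) argument exactly: the corollary is obtained by combining Proposition~\ref{injectivetoinvsep} with the already-established implications ``invariantly separated $\Rightarrow$ totally separated $\Rightarrow$ $T_2$'' for quasitopological groups. Your alternative route via the continuous monomorphism $\psi$ into the Hausdorff group $\shape$ and the $T_2$ case of Proposition~\ref{injection} is also valid, but it is only a minor repackaging of the same ingredients rather than a genuinely different proof.
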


The following statement is a corollary of Proposition \ref{injectivetoinvsep} only in the sense that it follows from Theorem \ref{injectivetohausdorff} and known cases of $\pi_1$-shape injectivity. These cases are typically non-trivial to confirm. For example, if $X$ is a 1-dimensional metric space \cite{Eda98} or a subset of $\mathbb{R}^2$ \cite{FZ05}, then $X$ is $\pi_1$-shape injective.

\begin{corollary}
If $X$ is a 1-dimensional metric space or a subset of $\mathbb{R}^2$, then $\piqtopx$ is invariantly separated and therefore $T_2$.
\end{corollary}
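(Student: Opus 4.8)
The plan is to deduce this statement directly from Proposition \ref{injectivetoinvsep} (or, for the $T_2$ conclusion, from Corollary \ref{injectivetohausdorff}), once $\pi_1$-shape injectivity has been recorded in each of the two listed cases. First I would note that both classes of spaces under consideration are metrizable, hence paracompact Hausdorff, so that the shape-theoretic constructions of the preceding subsection apply verbatim: in particular the fundamental pro-group and the canonical continuous homomorphism $\psi:\pionex\to\shape$ are defined, and the property ``$\pi_1$-shape injective'' is meaningful for these spaces.

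Next I would invoke the known $\pi_1$-shape injectivity results as black boxes: by Eda's theorem \cite{Eda98}, every path-connected $1$-dimensional metric space is $\pi_1$-shape injective, and by Fischer--Zastrow \cite{FZ05}, every path-connected subset of $\mathbb{R}^2$ is $\pi_1$-shape injective. With $\pi_1$-shape injectivity in hand, Proposition \ref{injectivetoinvsep} immediately yields that $\piqtopx$ is invariantly separated, and then the implication chain ``invariantly separated $\Rightarrow$ totally separated $\Rightarrow$ $T_2$'' established earlier (equivalently, Corollary \ref{injectivetohausdorff}) gives that $\piqtopx$ is $T_2$. This completes the argument.

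Since the corollary is a purely formal consequence of earlier results in the paper together with the two cited injectivity theorems, there is essentially no obstacle internal to this proof; all of the genuine difficulty is quarantined inside \cite{Eda98} and \cite{FZ05}, which are substantial results. The only point requiring mild care is that those theorems be applied in the full generality claimed here (arbitrary $1$-dimensional metric spaces and arbitrary planar subsets, not merely compact ones), but this is precisely the form in which they are stated in the cited sources.
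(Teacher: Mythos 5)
Your proposal is correct and follows exactly the route the paper takes: cite the $\pi_1$-shape injectivity of $1$-dimensional metric spaces \cite{Eda98} and planar sets \cite{FZ05}, then apply Proposition \ref{injectivetoinvsep} and the implication chain to $T_2$. No further comment is needed.
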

\begin{example}
\emph{
For general compact metric spaces, the converse of Proposition \ref{injectivetoinvsep} is false. Consider the compact space $Z\subset\mathbb{R}^{3}$ of \cite{FRVZ11} obtained by rotating the closed topologist's sine curve (so the linear path component forms a cylinder) and connecting the two resulting surface components by attaching a single arc. It is easy to see that $\pi_{1}^{qtop}(Z,z_0)$ and $\check{\pi}_{1}(Z,z_0)$ are both isomorphic to the discrete group of integers (and thus invariantly separated), however, $\psi:\pi_{1}^{qtop}(Z,z_0)\to \check{\pi}_{1}(Z,z_0)$ is the trivial homomorphism.
}
\end{example}

To show the converse of Proposition \ref{injectivetoinvsep} holds for locally
path connected spaces, we recall the notion of Spanier group \cite{Spanier66}. For more related to Spanier groups see \cite{BF}\cite{FRVZ11}\cite{FZ07}\cite{MPT}\cite{Wilkins}. Our approach is closely related to that in \cite{FZ13}.

\begin{definition}
\label{spaniergroupdef}\emph{\ Let $\mathscr{U}$ be an open cover of $X$.
The \textbf{Spanier group of} $X$ \textbf{with respect to} $\mathscr{U}$ is
the subgroup $\pi^{s}(\mathscr{U},x_0)\leq \pi_{1}(X,x_0)$ generated by elements of the form $[\alpha][\gamma][\alpha^{-}]$
where $\alpha\in \pxxo$ and $\gamma:[0,1]\to U$ is a loop based at 
$\alpha(1)$ for some $U\in\mathscr{U}$. The \textbf{Spanier group} of $X$ is the intersection $\pi^{s}(X,x_0)=\bigcap_{\mathscr{U}}\pi^{s}(\mathscr{U},x_0)$ of all Spanier
groups with respect to open covers.}
\end{definition}

\begin{lemma}
If $X$ is locally path connected and $\mathscr{U}$ is an open cover of $X$,
then $\pi^{s}(\mathscr{U},x_0)$ is an open invariant subgroup of $\pi_{1}^{qtop}(X,x_0)$.
\end{lemma}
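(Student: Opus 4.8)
The plan is to verify the two assertions separately: first that $\spu$ is a subgroup that is invariant (closed under conjugation), and then that it is open in $\piqtopx$. The invariance is essentially built into the definition. The generators of $\spu$ have the form $g = [\alpha][\gamma][\alpha^{-}]$ with $\alpha \in \pxxo$ and $\gamma$ a loop in some $U \in \scru$ based at $\alpha(1)$. Given any $[\delta] \in \pionex$, conjugating $g$ by $[\delta]$ yields $[\delta\cdot\alpha][\gamma][(\delta\cdot\alpha)^{-}]$, which is again of the generating form (with the path $\delta\cdot\alpha$ in place of $\alpha$, and the same small loop $\gamma$ in the same $U$). Since conjugation by $[\delta]$ is an automorphism of $\pionex$, it carries the subgroup generated by the generators to the subgroup generated by their conjugates, which lies in $\spu$; hence $\spu$ is invariant. (That it is a subgroup at all is immediate since it is defined as the subgroup generated by a set.)

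The substantive part is openness. Since $\piqtopx$ is a quasitopological group, it suffices by homogeneity (left translations are homeomorphisms) to show that $\spu$ is a neighborhood of the identity, i.e. that $\pi^{-1}(\spu)$ contains an open neighborhood of some loop representing the identity --- concretely, I would show $\pi^{-1}(\spu)$ is a neighborhood of the constant loop $c_{x_0}$, and then use that $\pi$ is a quotient map together with saturation. Actually the cleanest route: show that for \emph{every} loop $\alpha$ with $[\alpha] \in \spu$, there is a basic open neighborhood $\mathcal V$ of $\alpha$ in $\oxx$ with $\mathcal V \subseteq \pi^{-1}(\spu)$; this makes $\pi^{-1}(\spu)$ open, and since it is $\pi$-saturated and $\pi$ is quotient, $\spu$ is open. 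To build such a $\mathcal V$: using local path connectedness, refine $\scru$ to an open cover by path-connected sets, choose a partition $0 = t_0 < \dots < t_n = 1$ so that $\alpha([t_{i-1},t_i])$ lies in a path-connected open $V_i \in \scru$ (refined), pick path-connected $W_i \ni \alpha(t_i)$ with $W_i \subseteq V_i \cap V_{i+1}$, and set $\mathcal V = \bigcap_i \langle [t_{i-1},t_i], V_i\rangle \cap \bigcap_i \langle \{t_i\}, W_i\rangle$. The key computation --- and the main obstacle --- is to check that any $\delta \in \mathcal V$ satisfies $[\delta] \in \spu$. This is done exactly as in the proof of Lemma \ref{equivalence1}: factor $\delta \simeq \delta_1\cdots\delta_n$, insert correction paths $\epsilon_i$ in $W_i$ from $\delta(t_i)$ to $\alpha(t_i)$, and rewrite $\delta$ as a concatenation of loops each of which is (up to the "whisker" prefix) a loop inside a single $V_i$; each such loop differs from the corresponding piece of $\alpha$ by a $\spu$-generator, so $[\delta]$ and $[\alpha]$ differ by an element of $\spu$, whence $[\delta] \in \spu$.

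The one point requiring care in that rewriting is bookkeeping of which small loops land in which cover element: the pieces of $\delta$ and $\alpha$ over $[t_{i-1},t_i]$ together with the correction paths $\epsilon_{i-1}, \epsilon_i$ form a loop contained in $V_i$ (here path-connectedness of $V_i$ and of the $W_i$ is used to keep everything inside $V_i$ and to realize $\epsilon_i$), and conjugating this loop back to the basepoint by the appropriate initial segment of $\alpha$ exhibits it as a product of $\spu$-generators. Summing over $i$, $[\alpha^{-}\cdot\delta]$ --- or more precisely $[\alpha]^{-1}[\delta]$ --- lies in $\spu$. I expect no difficulty beyond this telescoping argument, which is routine once the neighborhood $\mathcal V$ is set up correctly; the choice of the $W_i$ inside the \emph{intersections} $V_i \cap V_{i+1}$ is what makes the correction paths legitimate simultaneously for the two adjacent cover elements.
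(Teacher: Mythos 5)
Your proof is correct, but it takes a genuinely different route from the paper's. The paper disposes of openness by citation: Spanier's classical construction yields a covering map $p:Y\to X$ with $p_{\ast}\pi_1(Y,y_0)=\pi^{s}(\mathscr{U},x_0)$ (this is where local path connectedness enters there), and a result from the semicoverings paper asserts that a covering map induces an \emph{open embedding} $\pi_{1}^{qtop}(Y,y_0)\to\pi_{1}^{qtop}(X,x_0)$, so the image $\pi^{s}(\mathscr{U},x_0)$ is open; invariance is noted to hold by construction, exactly as in your first paragraph. You instead give a self-contained argument: show $\pi^{-1}\bigl(\pi^{s}(\mathscr{U},x_0)\bigr)$ is open in $\Omega(X,x_0)$ by surrounding each loop $\alpha$ representing a Spanier element with a basic compact-open neighborhood built from a Lebesgue-number partition into path-connected refining sets $V_i$ and path-connected hinge sets $W_i\subseteq V_i\cap V_{i+1}$, then checking with correction paths $\epsilon_i$ in $W_i$ that any $\delta$ in this neighborhood satisfies $[\delta][\alpha]^{-1}=\prod_i[\beta_{i-1}][\epsilon_{i-1}^{-}\cdot\delta_i\cdot\epsilon_i\cdot\alpha_i^{-}][\beta_{i-1}^{-}]$, a product of Spanier generators since each inner loop lies in $V_i$ and hence in a member of $\mathscr{U}$ (the telescoping works, with $\epsilon_0,\epsilon_n$ constant because $\delta$ and $\alpha$ agree at the endpoints, and saturation of $\pi^{-1}$ plus the quotient property finishes it). What the paper's route buys is brevity and a conceptual link to covering space theory; what yours buys is independence from the external open-embedding theorem and an explicit display of where local path connectedness is used --- it is essentially the machinery of the second half of the paper's proof of Lemma \ref{equivalence1}, redeployed for this purpose.
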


\begin{proof}
The subgroup $\pi^{s}(\mathscr{U},x_0)$ is invariant by construction. By classical covering space theory \cite{Spanier66}, there is a covering map $p:Y\to X$, $p(y_0)=x_0$ such that the image of the injection $p_{\ast}:\pi_1(Y,y_0)\to \pionex$ is precisely $\spu$. Since $p$ is a covering map, the induced homomorphism $p_{\ast}:\piqtopy\to \piqtopx$ is an open embedding of quasitopological groups \cite{Brazsemi}. In particular, $p_{\ast}(\piqtopy)=\spu$ is open in $\piqtopx$.
\end{proof}

Since open subgroups of a quasitopological group are also closed and $\pi^{s}(X,x_0)=\bigcap_{\mathscr{U}}\pi^{s}(\mathscr{U},x_0)$, we obtain the following corollary.

\begin{corollary}
If $X$ is locally path connected, then $\pi^{s}(X,x_0)$ is a closed
invariant subgroup of $\pi_{1}^{qtop}(X,x_0)$.
\end{corollary}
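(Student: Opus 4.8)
The plan is to deduce this corollary directly from the Lemma just proved, together with two elementary facts about quasitopological groups. First I would recall that for each open cover $\mathscr{U}$ of $X$, the Spanier group $\spu$ is an open invariant subgroup of $\piqtopx$. The key observation is then that an open subgroup $H$ of \emph{any} quasitopological group $G$ is automatically closed: the complement $G\setminus H$ is the union of the nontrivial left cosets $gH$ with $g\notin H$, and each such coset is open because left translation by $g$ is a homeomorphism (this is exactly the "multiplication continuous in each variable" part of the definition). Hence $G\setminus H$ is open and $H$ is closed. Applying this with $G=\piqtopx$ and $H=\spu$ shows that each Spanier group with respect to a cover is in fact a \emph{closed} invariant subgroup.

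Next I would pass to the intersection $\spx=\bigcap_{\mathscr{U}}\spu$ over all open covers. An arbitrary intersection of closed subsets is closed, so $\spx$ is closed in $\piqtopx$. An arbitrary intersection of subgroups is a subgroup, and an arbitrary intersection of invariant (conjugation-closed) subgroups is again invariant, since conjugation by a fixed element commutes with intersections. Therefore $\spx$ is a closed invariant subgroup of $\piqtopx$, which is the assertion.

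I do not expect any genuine obstacle here. The only input that is not pure point-set or elementary group theory is the standard fact that open subgroups of quasitopological groups are clopen, and that rests solely on the translations being homeomorphisms, a property built into the definition of a quasitopological group (and already exploited in the proof of Lemma \ref{topindist}). Everything else — that arbitrary intersections of closed sets are closed, and that arbitrary intersections of invariant subgroups are invariant subgroups — is exactly the routine verification that the sentence preceding the corollary alludes to.
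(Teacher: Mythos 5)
Your argument is correct and is essentially identical to the paper's: the preceding lemma gives that each $\pi^{s}(\mathscr{U},x_0)$ is an open invariant subgroup, open subgroups of a quasitopological group are closed (complement is a union of open cosets), and $\pi^{s}(X,x_0)$ is the intersection of these closed invariant subgroups, hence itself a closed invariant subgroup. The paper states this in one sentence; your write-up just supplies the standard justifications explicitly.
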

In general, $\pi^{s}(X,x_0)$ is not open in $\piqtopx$. In fact, when $X$ is locally path connected, $\pi^{s}(X,x_0)$ is open iff $X$ admits a categorical (but not necessarily simply connected) covering space.
\begin{lemma}
\label{totallynormalseparated} If $X$ is locally path connected, then $\piqtopx$ is invariantly separated iff $\pi^{s}(X,x_0)=1$.
\end{lemma}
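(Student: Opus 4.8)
The plan is to establish the sharper identity that, for $X$ locally path connected, the intersection of all open invariant subgroups of $\piqtopx$ is exactly $\spx$. Granting this, $\piqtopx$ is invariantly separated (i.e.\ that intersection equals $1$) if and only if $\spx=1$, which is the assertion. One half is already in hand: the preceding lemma says each $\spu$ is an open invariant subgroup of $\piqtopx$, so the intersection $I$ of all open invariant subgroups satisfies $I\subseteq\bigcap_{\scru}\spu=\spx$.

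The content is the reverse inclusion $\spx\subseteq I$, for which it suffices to fix an arbitrary open invariant subgroup $N$ of $\piqtopx$ and produce an open cover $\scru$ of $X$ with $\spu\subseteq N$ (then $\spx\subseteq\spu\subseteq N$, and intersecting over all such $N$ gives $\spx\subseteq I$). To build $\scru$, I would choose for each $x\in X$ a path $\alpha_x\in\pxxo$ with $\alpha_x(1)=x$ and consider the ``conjugation'' map $\kappa_x\colon\Omega(X,x)\to\oxx$, $\kappa_x(\gamma)=\alpha_x\cdot\gamma\cdot\alpha_x^{-}$, which is continuous for the compact-open topologies since concatenation is. Because $[\alpha_x\cdot c_x\cdot\alpha_x^{-}]=1\in N$, the constant loop $c_x$ lies in the open set $\kappa_x^{-1}(\pi^{-1}(N))\subseteq\Omega(X,x)$, so it has a basic neighbourhood $\bigcap_{k}\langle[r_{k-1},r_k],V_k\rangle$ contained there; each $V_k$ then contains $x$, and using local path connectedness I would pick a path connected open $U_x$ with $x\in U_x\subseteq\bigcap_k V_k$. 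By construction every loop $\gamma$ based at $x$ with image in $U_x$ satisfies $[\alpha_x\cdot\gamma\cdot\alpha_x^{-}]\in N$. Put $\scru=\{U_x:x\in X\}$.

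Finally I would check $\spu\subseteq N$ by verifying that each generator $[\beta][\gamma][\beta^{-}]$ lies in $N$, where $\beta\in\pxxo$ and $\gamma$ is a loop based at $\beta(1)$ with image in some $U_x\in\scru$. Since $U_x$ is path connected, choose a path $\delta$ in $U_x$ from $x$ to $\beta(1)$; then $\delta\cdot\gamma\cdot\delta^{-}$ is a loop based at $x$ with image in $U_x$, so writing $\mu=\alpha_x\cdot\delta$ we get $[\mu\cdot\gamma\cdot\mu^{-}]=[\alpha_x\cdot(\delta\cdot\gamma\cdot\delta^{-})\cdot\alpha_x^{-}]\in N$, and hence $[\beta\cdot\gamma\cdot\beta^{-}]=[\beta\cdot\mu^{-}]\,[\mu\cdot\gamma\cdot\mu^{-}]\,[\beta\cdot\mu^{-}]^{-1}\in N$ by invariance of $N$. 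I expect this last step to be the crux: it is exactly here that the hypothesis that $N$ is \emph{invariant} (not merely open) is used, to absorb the discrepancy between the arbitrary path $\beta$ in the definition of the Spanier group and the path $\mu$ coming from the construction of $\scru$. The remaining points — continuity of concatenation and the shape of basic neighbourhoods of constant loops — are routine, and the converse direction of the lemma needs nothing beyond the first paragraph.
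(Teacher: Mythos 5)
Your proof is correct and follows essentially the same route as the paper's: use openness of an invariant subgroup $N$ to extract, around each point, a path connected open set whose small loops conjugate into $N$ along a chosen path, and then invoke invariance of $N$ to absorb the arbitrary conjugating path in a Spanier generator. The only (harmless) variation is that you obtain the neighbourhood by pulling $\pi^{-1}(N)$ back along the continuous conjugation map $\gamma\mapsto\alpha_x\cdot\gamma\cdot\alpha_x^{-}$, whereas the paper takes a basic neighbourhood of the null-homotopic loop $\eta\cdot\eta^{-}$ directly in $\Omega(X,x_0)$ and reparameterizes.
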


\begin{proof}
Since the groups $\spu$ are open invariant subgroups of $\piqtopx$, the first direction is obvious. For the converse, suppose $G=\piqtopx$ is invariantly separated. Suppose $g\in G$ is not the identity element. Since $G$ is invariantly separated, there is an open invariant subgroup $N\subset G$ such that $g\notin N$. It suffices to find an open cover $\scru$ of $X$ such that $\spu\subseteq N$. Let $\eta:\ui\to X$ be any path starting at $x_0$. Note that $\eta\cdot\eta^{-}$ is a null-homotopic loop based at $x_0$. Since $N$ is open, $\pi^{-1}(N)$ is an open neighborhood of $\eta\cdot\eta^{-}$ in $\Omega(X,x_0)$. Find a basic neighborhood $\mathcal{V}_{\eta}=\bigcap_{j=1}^{2n}\left\langle \left[\frac{j-1}{2n},\frac{j}{2n}\right],V_j\right\rangle$ of $\eta\cdot\eta^{-}$ contained in $\pi^{-1}(N)$. Note that $\eta(1)\in V_n$. Since $X$ is locally path connected, there is a path connected neighborhood $U_{\eta}$ such that $\eta(1)\in U_{\eta}\subset V_n$. Now $\scru=\{U_{\eta}|\eta\in\pxxo\}$ is an open cover of $X$. 

Suppose $[\alpha\cdot\gamma\cdot\alpha^{-}]$ is a generator of $\spu$ where $\gamma$ is a loop with image in $U_{\eta}$ where $\eta(0)=x_0$. Let $\epsilon:\ui\to U_{\eta}$ be a path from $\eta(1)$ to $\alpha(1)=\gamma(0)=\gamma(1)$. Note that an appropriate reparameterization of $\eta\cdot\epsilon\cdot\gamma\cdot\epsilon^{-}\cdot\eta^{-}$ lies in $\mathcal{V}_{\eta}$. Thus $[\eta\cdot\epsilon][\gamma][\eta\cdot\epsilon]^{-1}\in N$. Since $N$ is invariant \[[\alpha][\gamma][\alpha^{-}]=[\alpha\cdot(\eta\cdot\epsilon)^{-}] [\eta\cdot\epsilon][\gamma][(\eta\cdot\epsilon)^{-}][\eta\cdot\epsilon\cdot\alpha^{-}]\in [\alpha\cdot(\eta\cdot\epsilon)^{-}]N\left[\left(\alpha\cdot\left(\eta\cdot\epsilon\right)^{-}\right)^{-}\right]=N\]Thus $\spu\subseteq N$, completing the proof.
\end{proof}

\begin{theorem}\label{injectiveisinvsep}
If $X$ is locally path connected, paracompact Hausdorff (for instance, if $X$
is a Peano continuum), then the following are equivalent:

\begin{enumerate}
\item $X$ is $\pi_1$-shape injective
\item $\pi^{s}(X,x_0)=1$
\item $\pi_{1}^{qtop}(X,x_0)$ invariantly separated.
\end{enumerate}
\end{theorem}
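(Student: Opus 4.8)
The plan is to close the cycle of implications. Proposition~\ref{injectivetoinvsep} already gives (1)$\Rightarrow$(3), and Lemma~\ref{totallynormalseparated} (using local path connectedness) gives (2)$\Leftrightarrow$(3). Hence everything reduces to proving (2)$\Rightarrow$(1), and for this it suffices to establish the inclusion of subgroups $\ker\psi\subseteq \spx$ in $\pionex$: granting it, the hypothesis $\spx=1$ forces $\ker\psi=1$, i.e.\ $\psi:\pionex\to\shape$ is injective, which is exactly $\pi_1$-shape injectivity.

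To prove $\ker\psi\subseteq\spx=\bigcap_{\scrv}\spv$, fix a loop $\alpha\in\oxx$ with $\psi([\alpha])=1$ and fix an arbitrary open cover $\scrv$ of $X$; the goal is $[\alpha]\in\spv$. Here the hypothesis that $X$ is paracompact Hausdorff enters: since $X$ is then fully normal, $\scrv$ admits an open star refinement, which in turn admits a locally finite open refinement, so one obtains a locally finite open cover $\scru$ that star-refines $\scrv$ and has a distinguished element $U_0\ni x_0$; equip it with a subordinated partition of unity with $\phi_{U_0}(x_0)=1$, so that $(\scru,U_0)\in cov(X)$ and $p_{\scru}:X\to|\nerveu|$ satisfies $p_{\scru}(x_0)=U_0$. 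Since $\psi([\alpha])=1$, the loop $p_{\scru}\circ\alpha$ at the vertex $U_0$ is null-homotopic in $|\nerveu|$.

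Next I would run the classical nerve/edge-path computation (see \cite{Spanier66}; carried out in Spanier-group language in \cite{FRVZ11} and \cite{BF}). Compactness of $\ui$ gives a partition $0=s_0<\dots<s_m=1$ with $\alpha([s_{j-1},s_j])\subseteq U_j$ for suitable $U_j\in\scru$; then $\alpha(s_j)\in U_j\cap U_{j+1}$ shows $\{U_j,U_{j+1}\}$ spans an edge of $\nerveu$, and a star-shaped (simplicial approximation) homotopy shows $p_{\scru}\circ\alpha$ is homotopic rel basepoint to the edge loop $e$ through $U_0,U_1,\dots,U_m,U_0$. As $e$ is null-homotopic in $|\nerveu|$, in the edge-path group it equals a product of conjugates of the boundary relators of $2$-simplices $\{U_a,U_b,U_c\}$ (those with $U_a\cap U_b\cap U_c\neq\emptyset$). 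Transporting this identity back to $X$ along paths, the subarcs $\alpha|_{[s_{j-1},s_j]}$ realize the edges, each triangle relator is realized by a loop with image in $U_a\cup U_b\cup U_c$, and by the star-refinement property $U_a\cup U_b\cup U_c$ lies in a single $V\in\scrv$; reading off the resulting identity in $\pionex$ exhibits $[\alpha]$ as a product of conjugates $[\delta][\gamma][\delta^{-}]$ with each $\gamma$ a loop in an element of $\scrv$, so $[\alpha]\in\spv$. As $\scrv$ was arbitrary, $[\alpha]\in\spx$.

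The main obstacle is the bookkeeping in this transport step: choosing basepoints in, and connecting paths within, the cover elements and their pairwise unions (here local path connectedness of $X$ is used to keep these paths small), reconciling the partition-of-unity condition ``$\phi_U>0$'' with the membership condition ``image in $U$'' defining $\spv$, arranging that all conjugators emanate from $x_0$, and verifying that the relators of the edge-path presentation really do pull back to the asserted Spanier generators. This is standard but fiddly; alternatively one may simply invoke the known identification $\ker\psi=\spx$ for locally path connected paracompact Hausdorff spaces established in \cite{BF}, of which only the inclusion $\ker\psi\subseteq\spx$ is needed here.
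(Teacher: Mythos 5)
Your proof is correct, but it closes the cycle of implications somewhat differently from the paper. The paper's proof is a two-line citation: (1)$\Leftrightarrow$(2) is quoted as the main result of \cite{BF} (the identity $\ker\psi=\spx$ for locally path connected paracompact Hausdorff spaces), and (2)$\Leftrightarrow$(3) is Lemma \ref{totallynormalseparated}. You instead get (1)$\Rightarrow$(3) from Proposition \ref{injectivetoinvsep} and (3)$\Leftrightarrow$(2) from Lemma \ref{totallynormalseparated}, so the only external input you still need is the single inclusion $\ker\psi\subseteq\spx$ rather than the full equality, and you sketch the nerve/edge-path proof of that inclusion. This buys a marginally lighter dependence on \cite{BF} and makes visible where the hypotheses actually enter (paracompactness for locally finite star refinements, local path connectedness for transporting the edge-path relators back to $X$), at the cost of having to redo the bookkeeping that \cite{BF} was written to handle --- in particular the mismatch you correctly flag between ``$x\in U$'' and ``$\phi_U(x)>0$'': the condition $\alpha([s_{j-1},s_j])\subseteq U_j$ does not by itself place $p_{\scru}\circ\alpha|_{[s_{j-1},s_j]}$ in the open star of the vertex $U_j$, so the partition of unity (or the refinement by the sets $\phi_U^{-1}((0,1])$) has to be chosen with care. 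Your outline of that step is the standard argument and the obstacles you list are exactly the ones resolved in \cite{BF}; since you also offer the fallback of citing the inclusion $\ker\psi\subseteq\spx$ directly from \cite{BF}, the proof is complete either way, and note that the inclusion you need is the substantive direction of that paper, so the saving over the paper's direct citation is modest.
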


\begin{proof}
1. $\Leftrightarrow$ 2. is the main result of \cite{BF} and 2. $\Leftrightarrow$ 3. follows from Lemma \ref{totallynormalseparated}.
\end{proof}

The results above show the data of the fundamental group of a Peano continuum $X$ retained by shape (i.e. the fundamental pro-group) is retained by the topology of $\piqtopx$ in the form of open invariant subgroups. Example \ref{t1butnotshapeinj} provides a space $X$ for which shape fails to distinguish uncountably many elements non-identity elements from the identity but for which the quotient topology of $\piqtopx$ can distinguish any two distinct elements. It is apparently an open question whether or not the converse of Corollary \ref{injectivetohausdorff} holds when $X$ is locally path connected.
\begin{problem}\label{openq2}
\emph{
Is there a Peano continuum $X$ which is not $\pi_1$-shape injective but is such that $\piqtopx$ is $T_2$?
}
\end{problem}

\begin{remark}
\emph{
The property ``totally separated" may also be interpreted within shape theory. If $X$ is metrizable, then $\piqtopx$ is totally separated iff $\Omega(X,x_0)$ is $\pi_0$-shape injective in the sense that the canonical function \[\piqtopx=\pi_0(\Omega(X,x_0))\to\check{\pi}_{0}(\Omega(X,x))\] to the zeroth shape set is injective. The details are straightforward and are left to the reader.
}
\end{remark}

We conclude this section with a special case where all of the separation properties mentioned so far coincide.

\begin{definition}
\emph{ Let $X_1\subseteq X_2\subseteq \cdots$ be a nested sequence of
subspaces where $X_{j}$ is a closed retract of $X_{j+1}$, i.e. there is a
map $r_{j+1,j}:X_{j+1}\to X_j$ such that if $s_{j+1,j}:X_j\to X_{j+1}$ is
the inclusion, then $r_{j+1,j}\circ s_{j+1,j}=id_{X_j}$. The inverse limit $
X=\varprojlim (X_j,r_{j+1,j})$ is called the \textbf{inverse limit of nested
retracts}. }
\end{definition}

Suppose $X=\varprojlim (X_j,r_{j+1,j})$ is an inverse limit of nested retracts. If $j<k$, let $r_{k,j}:X_k\to X_j$ and $s_{k,j}:X_j\to X_k$ be the obvious
compositions of retractions and sections respectively. The projection maps $%
r_j:X\to X_j$ are retractions with sections $s_j:X_j\to X$ given by
\begin{equation*}
s_j(x)=(r_{j,1}(x),r_{j,2}(x),...,r_{j,j-1}(x),x,s_{j+1,j}(x),s_{j+2,j}(x),...).
\end{equation*}
This is well-defined since $r_{k+1,k}(s_{k+1,j}(x))=s_{k,j}(x)$ when $j<k$. Since we may view $X_j$ as a subspace of $X$ for each $j$, we take a
basepoint in $X$ to be a point $x_0\in X_1\subset X$.

\begin{example}\label{he1}
\emph{
Let $C_n\subset \mathbb{R}^2$ be the circle of radius $\frac{1}{n}$ centered at $\left(\frac{1}{n},0\right)$ (See Figure 1). The usual \textbf{Hawaiian earring} $\mathbb{H}=\bigcup_{n\geq 1}C_n$ is a locally path connected inverse limit of nested retracts where $X_j=\bigcup_{n=1}^{j}C_n$ and $r_{j+1,j}:X_{j+1}\to X_j$ collapses $C_{j+1}$ to the canonical basepoint. 
}
\end{example}
\begin{figure}[H]
\centering \includegraphics[height=1.5in]{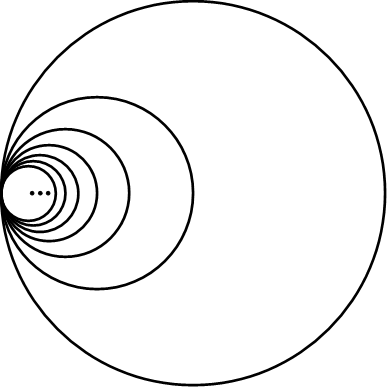}
\caption{The Hawaiian earring $\mathbb{H}$}
\end{figure}

The projection maps $r_j:X\to X_j$ induce continuous homomorphisms $(r_{j})_{\ast}:
\pi_{1}^{qtop}(X,x_0)\to \pi_{1}^{qtop}(X_j,x_0)$ which, together, induce a
continuous homomorphism 
\begin{equation*}
\phi:\pi_{1}^{qtop}(X,x_0)\to
\varprojlim\left(\pi_{1}^{qtop}(X_j,x_0),(r_{j+1,j})_{\ast}\right)
\end{equation*}
to the limit of the inverse system of quasitopological groups $\pi_{1}^{qtop}(X_j,x_0)$ and bonding maps $(r_{j+1,j})_{\ast}:
\pi_{1}^{qtop}(X_{j+1},x_0)\to \pi_{1}^{qtop}(X_j,x_0)$.

\begin{lemma}\label{phiinjective}
If $X=\varprojlim (X_j,r_{j+1,j})$ is an inverse limit of nested retracts and $\piqtopx$ is $T_1$, then $\phi:\piqtopx\to
\varprojlim\left(\pi_{1}^{qtop}(X_j,x_0),(r_{j+1,j})_{\ast}\right)$ is
injective.
\end{lemma}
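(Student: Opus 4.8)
The plan is to show that the kernel of $\phi$ is trivial by exploiting the retraction structure together with the hypothesis that $\piqtopx$ is $T_1$. Suppose $[\gamma]\in\ker\phi$, so that $(r_j)_\ast[\gamma]=1$ in $\pi_1^{qtop}(X_j,x_0)$ for every $j$; equivalently, $r_j\circ\gamma$ is null-homotopic in $X_j$ for every $j$. I want to conclude $[\gamma]=1$, and by Corollary \ref{tzero} (applied to the $T_1$ quasitopological group $\piqtopx$) it is enough to show that the trivial subgroup is closed, i.e.\ that every open neighborhood of the identity in $\piqtopx$ contains $[\gamma]$; actually it is cleaner to show directly that $[\gamma]$ lies in the closure of $\{1\}$, since then $T_1$ forces $[\gamma]=1$. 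So the real task is: produce, for each basic open neighborhood $\mathcal{W}$ of the constant loop $c_{x_0}$ in $\Omega(X,x_0)$, a loop in $\mathcal{W}$ that is based-homotopic to $\gamma$ in $X$.

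First I would fix a basic neighborhood $\mathcal{W}=\bigcap_{i=1}^{n}\langle[t_{i-1},t_i],U_i\rangle$ of $c_{x_0}$; since the sets $U_i$ are open in the inverse limit $X$, I can shrink them to depend on only finitely many coordinates, hence pull back from some single stage $X_m$: concretely there are open sets $U_i'\subseteq X_m$ with $U_i\supseteq r_m^{-1}(U_i')\ni x_0$, and it suffices to find a loop $\delta$ with $\delta\simeq\gamma$ rel $x_0$ such that $r_m\circ\delta$ lies in the corresponding basic neighborhood of $c_{x_0}$ in $\Omega(X_m,x_0)$. The key geometric input is the section $s_m:X_m\to X$ together with the retraction $r_m:X\to X_m$ satisfying $r_m\circ s_m=\mathrm{id}$. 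By hypothesis $r_m\circ\gamma$ is null-homotopic in $X_m$; let $H:[0,1]^2\to X_m$ be a based null-homotopy. Then $\delta:=s_m\circ(r_m\circ\gamma)$ is a loop in the subspace $X_m\subseteq X$, and $s_m\circ H$ is a homotopy in $X$ from $\delta$ to the constant loop — but I need $\delta\simeq\gamma$, not $\delta\simeq c_{x_0}$.

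Here is where the nested-retract structure must be used more carefully, and this is the main obstacle: $\gamma$ and $s_m\circ r_m\circ\gamma$ need not be homotopic in $X$ for a single fixed $m$ — the discrepancy lives in the circles $C_{m+1},C_{m+2},\dots$ past stage $m$. The resolution I expect to work: since $r_j\circ\gamma$ is null-homotopic for \emph{all} $j$, and since $X=\varprojlim X_j$ with compatible sections, one shows that as $m\to\infty$ the loops $\gamma_m:=s_m\circ r_m\circ\gamma$ converge to $\gamma$ in $\Omega(X,x_0)$ (using that a loop in $X$ is determined coordinatewise and $r_j\circ\gamma_m=r_j\circ\gamma$ for $j\le m$), so that eventually $\gamma_m$ itself lies in $\mathcal{W}$. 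But each $\gamma_m$ factors through $X_m$ where $r_m\circ\gamma$ bounds, so $\gamma_m=s_m\circ(r_m\circ\gamma)$ is null-homotopic \emph{in $X$} via $s_m$ composed with the null-homotopy in $X_m$. Thus $\gamma_m\in\mathcal{W}\cap\pi^{-1}(1)$ for large $m$, which exhibits $c_{x_0}$ (hence $1$) as a limit point consistent with $[\gamma]$; combined with the fact that $\gamma$ itself is approximated by these null-homotopic loops, one gets that every neighborhood of $1$ in $\piqtopx$ meets every neighborhood of $[\gamma]$ in the quotient — forcing $[\gamma]=1$ by $T_1$.

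I would organize the final writeup as: (i) reduce via Corollary \ref{tzero} to showing $\ker\phi=\{1\}$ is closed, equivalently that any $[\gamma]\in\ker\phi$ is topologically indistinguishable from $1$; (ii) express basic neighborhoods in $\Omega(X,x_0)$ in terms of finitely many coordinates and reduce to a single stage $X_m$; (iii) use the section $s_m$ to convert the stagewise null-homotopies of $r_m\circ\gamma$ into honest null-homotopies in $X$ of the approximating loops $\gamma_m=s_m\circ r_m\circ\gamma$; (iv) check $\gamma_m\to\gamma$ in $\Omega(X,x_0)$, so $\gamma_m$ eventually lies in any prescribed neighborhood of $\gamma$, and conclude via $T_1$. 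The step I expect to be genuinely delicate is (iii)–(iv): making precise the sense in which the null-homotopy of $r_m\circ\gamma$ \emph{in $X_m$} becomes, after applying $s_m$, a null-homotopy of the right loop in $X$, and verifying that the coordinatewise agreement $r_j\gamma_m=r_j\gamma$ for $j\le m$ is enough to force $\gamma_m\in\mathcal{W}$ for $m$ large — this is where the compact-open topology of $\Omega(X,x_0)$ and the inverse-limit topology on $X$ interact, and care is needed so that the partition and the open sets $U_i$ are handled uniformly.
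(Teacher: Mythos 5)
Your proposal is correct and, once you pivot to the loops $\gamma_m=s_m\circ r_m\circ\gamma$, it is essentially the paper's own argument: the paper identifies $\Omega(X,x_0)$ with $\varprojlim\Omega(X_j,x_0)\subseteq\prod_j\Omega(X_j,x_0)$, notes that $g_j=s_j\circ(r_j\circ\gamma)$ is null-homotopic in $X$ and agrees with $\gamma$ in the first $j$ coordinates, hence $g_j\to\gamma$ in $\Omega(X,x_0)$, so $[g_j]=1\to[\gamma]$ and $T_1$ forces $[\gamma]=1$. The steps you flag as delicate in (iii)--(iv) go through exactly as you anticipate.
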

\begin{proof}
Since the based loop space functor preserves limits, there is a canonical homeomorphism $\Omega(X,x_0)\cong\varprojlim \left(\Omega(X_j,x_0),\Omega(r_{j+1,j})\right)$ where the limit has the usual inverse limit topology (i.e. as a subspace of $\prod_{j}\Omega(X_j,x_0)$). Thus a loop $f\in \Omega(X,x_0)$ is identified with the sequence $(f_1,f_2,...)$ where $f_j=r_j\circ f$.

Suppose $f:\ui\to X$ is a loop such that $[f]\in \ker \phi$ and note $\phi([f])=([f_1],[f_2],...)$ is the identity, $[f_j]$ is the identity in $\pi_{1}^{qtop}(X_j,x_0)$ for each $j$. Let $g_j=s_j\circ f_j:\ui\to X_j\to X$. Since $f_j:\ui\to X$ is null-homotopic, so is $g_j$. Note that $g_j=(f_1,f_2,...,f_j,s_{j+1,j}\circ f_j,s_{j+2,j}\circ f_j,...)$.

Since we identify $\Omega(X,x_0)$ as subspace of the direct product $\prod_{j}\Omega(X_j,x_0)$, we have $g_j\to f$ in $\Omega(X,x_0)$. Since $\pi:\Omega(X,x_0)\to \piqtopx$ is continuous, $[g_j]\to [f]$ in $\piqtopx$ where each $[g_j]$ is the identity. But if $\piqtopx$ is $T_1$, every constant sequence has a unique limit and thus $[f]$ must be the identity of $\piqtopx$.
\end{proof}
\begin{theorem}\label{nestedretracts}
Suppose $X=\varprojlim (X_j,r_{j+1,j})$ is an inverse limit of nested retracts such that $\pi_{1}^{qtop}(X_j,x_0)$ is invariantly separated for each $j$. Then the following are equivalent:
\begin{enumerate}
\item $\piqtopx$ is $T_1$,
\item $\piqtopx$ is $T_2$,
\item $\piqtopx$ is totally separated,
\item $\piqtopx$ is invariantly separated.
\end{enumerate}
\end{theorem}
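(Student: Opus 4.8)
The implications $4 \Rightarrow 3 \Rightarrow 2 \Rightarrow 1$ are already available: $4 \Rightarrow 3$ and $3 \Rightarrow 2$ come from the proposition at the start of Section 3.2, and $2 \Rightarrow 1$ is trivial since every $T_2$ space is $T_1$. So the entire content of the theorem is the single implication $1 \Rightarrow 4$: if $\piqtopx$ is $T_1$, then it is invariantly separated. The plan is to exploit Lemma \ref{phiinjective} together with the hypothesis that each $\pinqtop(X_j,x_0)$ is invariantly separated.

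First I would invoke Lemma \ref{phiinjective}: assuming $\piqtopx$ is $T_1$, the canonical homomorphism $\phi:\piqtopx\to \varprojlim\left(\pinqtop(X_j,x_0),(r_{j+1,j})_{\ast}\right)$ is injective, and it is continuous by construction. Next I would observe that the limit group $L = \varprojlim\left(\pinqtop(X_j,x_0),(r_{j+1,j})_{\ast}\right)$ is invariantly separated: each factor $\pinqtop(X_j,x_0)$ is invariantly separated by hypothesis, so the product $\prod_j \pinqtop(X_j,x_0)$ is invariantly separated by Proposition \ref{productinvar}, and $L$ is a subgroup of this product with the subspace topology, hence the inclusion $L \hookrightarrow \prod_j \pinqtop(X_j,x_0)$ is a continuous monomorphism, so $L$ is invariantly separated by Proposition \ref{injection}. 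Finally, composing, $\phi:\piqtopx \to L$ is a continuous monomorphism into an invariantly separated quasitopological group, so another application of Proposition \ref{injection} gives that $\piqtopx$ is invariantly separated. This establishes $1 \Rightarrow 4$ and closes the cycle of equivalences.

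I do not expect any serious obstacle here — the argument is essentially an assembly of Lemma \ref{phiinjective} and Propositions \ref{productinvar} and \ref{injection}. The one place to be slightly careful is confirming that the inverse limit $L$, topologized as a subspace of $\prod_j \pinqtop(X_j,x_0)$, is indeed a quasitopological group so that Proposition \ref{injection} applies; this is routine since a subgroup of a quasitopological group with the subspace topology is again a quasitopological group, and it is implicitly used already in the discussion preceding Lemma \ref{phiinjective}. The other minor point worth a sentence is that $\phi$ being both continuous (built from the continuous homomorphisms $(r_j)_\ast$ via the universal property of the limit) and injective (by Lemma \ref{phiinjective}, using the $T_1$ hypothesis) is exactly the hypothesis Proposition \ref{injection} requires. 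So the proof is short: $1 \Rightarrow 4$ via Lemma \ref{phiinjective} and Propositions \ref{productinvar}, \ref{injection}; the remaining implications $4\Rightarrow 3\Rightarrow 2\Rightarrow 1$ are immediate from the earlier proposition on invariantly separated groups.
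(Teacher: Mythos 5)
Your proposal is correct and follows essentially the same route as the paper: the implication $1 \Rightarrow 4$ is obtained exactly by combining Lemma \ref{phiinjective} with Propositions \ref{productinvar} and \ref{injection} applied first to the inverse limit as a sub-quasitopological group of the product and then to the injective continuous homomorphism $\phi$. No changes are needed.
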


\begin{proof}
4. $\Rightarrow$ 3. $\Rightarrow$ 2. $\Rightarrow$ 1. follows from previous observations and basic topological facts. 1. $\Rightarrow$ 4. By assumption, $\pi_{1}^{qtop}(X_j,x_0)$ is invariantly separated for each $j$ (see Theorem \ref{discrete}). Therefore $\prod_j \pi_{1}^{qtop}(X_j,x_0)$ is invariantly separated (Proposition \ref{productinvar}) as is the sub-quasitopological group $\varprojlim\left(\pi_{1}^{qtop}(X_j,x_0),(r_{j+1,j})_{\ast}\right)$ (Proposition \ref{injection}). Since $\piqtopx$ is $T_1$, it follows from Lemma \ref{phiinjective} that the continuous homomorphism $\phi:\piqtopx\to
\varprojlim\left(\pi_{1}^{qtop}(X_j,x_0),(r_{j+1,j})_{\ast}\right)$ is injective. Apply Proposition \ref{injection} to see that $\piqtopx$ is invariantly separated.
\end{proof}
\begin{corollary}\label{nestedpolyhedra}
Suppose $X=\varprojlim (X_j,r_{j+1,j})$ is a locally path connected inverse limit of nested retracts where each $X_j$ is a locally path connected and semilocally simply connected metric space. Then $\piqtopx$ is $T_1$ iff $X$ is $\pi_1$-shape injective.
\end{corollary}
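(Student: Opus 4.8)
The plan is to obtain Corollary \ref{nestedpolyhedra} by combining Theorem \ref{nestedretracts} with the machinery already developed for $\pi_1$-shape injectivity, using the fact that metric Peano-type spaces are paracompact Hausdorff so that Theorem \ref{injectiveisinvsep} applies. First I would check the hypotheses of Theorem \ref{nestedretracts}: since each $X_j$ is locally path connected and semilocally simply connected, Theorem \ref{discrete} tells us $\pi_{1}^{qtop}(X_j,x_0)$ is discrete, hence invariantly separated; thus all four separation conditions in Theorem \ref{nestedretracts} are equivalent for $X$, and in particular ``$\piqtopx$ is $T_1$'' is equivalent to ``$\piqtopx$ is invariantly separated.'' So it remains to identify ``$\piqtopx$ is invariantly separated'' with ``$X$ is $\pi_1$-shape injective.''

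For that last equivalence I would invoke Theorem \ref{injectiveisinvsep}, which gives exactly ``$X$ is $\pi_1$-shape injective $\iff$ $\piqtopx$ is invariantly separated'' for $X$ locally path connected and paracompact Hausdorff. So the one genuine thing to verify is that $X=\varprojlim(X_j,r_{j+1,j})$ is paracompact Hausdorff (it is locally path connected by hypothesis). Here I would argue that $X$ is metrizable: it is a closed subspace of $\prod_j X_j$, each $X_j$ is a metric space, and a countable product of metric spaces is metrizable; a closed subspace of a metrizable space is metrizable, and metrizable spaces are paracompact Hausdorff. (One small point worth a sentence: the $X_j$ need not be separable or bounded, but the countable-product-of-metric-spaces-is-metrizable fact holds in full generality via the standard $\sum 2^{-j}\min(d_j,1)$ construction, so there is no issue.)

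Assembling: given $X$ as in the statement, $X$ is locally path connected and metrizable, hence paracompact Hausdorff, and each $\pi_{1}^{qtop}(X_j,x_0)$ is discrete hence invariantly separated. By Theorem \ref{nestedretracts}, $\piqtopx$ is $T_1$ iff $\piqtopx$ is invariantly separated; by Theorem \ref{injectiveisinvsep}, $\piqtopx$ is invariantly separated iff $X$ is $\pi_1$-shape injective. Chaining these gives the claim.

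I do not expect a serious obstacle here — this is a corollary that packages earlier results. The only place where care is needed is confirming the ambient hypotheses of Theorem \ref{injectiveisinvsep} transfer to the inverse limit (namely paracompact Hausdorff), and making sure the discreteness input for the $X_j$ is correctly cited from Theorem \ref{discrete} using both local path connectedness \emph{and} semilocal simple connectedness of each $X_j$. If one wanted the statement to also hand back the $T_2$/totally separated equivalences (as in Corollary \ref{nestedpolyhedra}'s sibling statement for nested retracts of polyhedra), one would simply append them via Theorem \ref{nestedretracts} as well, but the stated corollary only asks for the $T_1 \iff \pi_1$-shape injective equivalence, so the proof can be kept to the two-line chain above.
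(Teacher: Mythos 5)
Your proposal is correct and follows essentially the same route as the paper's proof: discreteness of each $\pi_{1}^{qtop}(X_j,x_0)$ via Theorem \ref{discrete} feeds into Theorem \ref{nestedretracts} to identify $T_1$ with invariant separatedness, and Theorem \ref{injectiveisinvsep} converts that to $\pi_1$-shape injectivity (the paper handles the easy direction via Corollary \ref{injectivetohausdorff}, but this is an immaterial difference). Your explicit check that $X$ is metrizable, hence paracompact Hausdorff, as a closed subspace of the countable product $\prod_j X_j$ is a hypothesis of Theorem \ref{injectiveisinvsep} that the paper leaves implicit, and it is correctly verified.
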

\begin{proof}
One direction is clear from Corollary \ref{injectivetohausdorff}. Since $X_j$ is locally path connected and semilocally simply connected, $\pi_{1}^{qtop}(X_j,x_0)$ is discrete (for each $j$) by Theorem \ref{discrete}. Thus if $\piqtopx$ is $T_1$, then $\piqtopx$ is invariantly separated by Theorem \ref{nestedretracts}. Theorem \ref{injectiveisinvsep} now gives that $X$ is $\pi_1$-shape injective.
\end{proof}
\section{On higher separation axioms}

Though every $T_2$ topological group is Tychonoff, it is not even true that every $T_2$ topological group is $T_4$ (normal and $T_2$) \cite{AT08}.

\begin{theorem}\label{t3tot4}
If $X$ is a compact metric space and $\pi_{1}^{qtop}(X,x_0)$ is a regular space, then $\pi_{1}^{qtop}(X,x_0)$ is normal. In particular, if $\piqtopx$ is $T_3$, then $\piqtopx$ is $T_4$.
\end{theorem}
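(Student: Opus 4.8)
The plan is to reduce the statement to the classical point-set fact that a regular Lindel\"of space is normal, by observing that $\piqtopx$ is always Lindel\"of when $X$ is a compact metric space. First I would check that the loop space $\oxx$ is Lindel\"of: since $X$ is compact metric, the compact-open topology on $\mathcal{P}(X)=C([0,1],X)$ coincides with the topology of uniform convergence and is therefore induced by the sup-metric, and moreover this metric space is separable. (One clean way to see separability: embed $X$ into the Hilbert cube $[0,1]^{\omega}$, identify a map $[0,1]\to[0,1]^{\omega}$ with a sequence of maps $[0,1]\to[0,1]$, and approximate coordinatewise by piecewise-linear maps with rational breakpoints and values, using that all but finitely many coordinates contribute less than any prescribed $\varepsilon$; alternatively one may simply cite the well-known fact that $C([0,1],X)$ is a separable metrizable, indeed Polish, space when $X$ is compact metric.) Hence $\oxx$, as a subspace of the separable metric space $\mathcal{P}(X)$, is separable metric and in particular Lindel\"of. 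Since $\pi:\oxx\to\piqtopx$ is a continuous surjection and the Lindel\"of property passes to continuous images, $\piqtopx$ is Lindel\"of.

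Next I would invoke (or prove in a single paragraph) the standard fact that every regular Lindel\"of space $G$ is normal, requiring no separation axiom on $G$. Given disjoint closed sets $A,B\subseteq G$, regularity supplies for each $a\in A$ an open $U_a\ni a$ with $\overline{U_a}\cap B=\emptyset$ and for each $b\in B$ an open $V_b\ni b$ with $\overline{V_b}\cap A=\emptyset$; since closed subspaces of Lindel\"of spaces are Lindel\"of, one extracts countable subcovers $U_1,U_2,\dots$ of $A$ and $V_1,V_2,\dots$ of $B$, and then $\bigcup_n\bigl(U_n\setminus\bigcup_{m\le n}\overline{V_m}\bigr)$ and $\bigcup_m\bigl(V_m\setminus\bigcup_{n\le m}\overline{U_n}\bigr)$ are disjoint open sets separating $A$ from $B$. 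Applying this with $G=\piqtopx$ shows $\piqtopx$ is normal as soon as it is regular. For the final clause, $T_3$ means regular together with $T_1$, so in that case $\piqtopx$ is normal and $T_1$, i.e. $T_4$.

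The only place the hypotheses on $X$ genuinely enter is the separability (hence Lindel\"of-ness) of $\mathcal{P}(X)$, and I expect this to be the main, though still routine, obstacle; everything downstream is soft point-set topology about quotient maps and Lindel\"of spaces. I would also note that Lemma~\ref{topropstokolmquotient} is not needed for this argument, although one could instead transfer normality between $\piqtopx$ and the $T_1$ quotient group $\piqtopx/\overline{e}$ via that lemma if one preferred to argue in the separated setting.
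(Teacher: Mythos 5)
Your proof is correct, and its core is the same as the paper's: both arguments rest on the observation that $\oxx$ is separable metric (hence Lindel\"of) when $X$ is compact metric, so that $\piqtopx$ is Lindel\"of as a quotient, and then on the classical implication from regular plus Lindel\"of to normal. Where you diverge is in how that last implication is executed. The paper does not want to invoke ``regular Lindel\"of implies normal'' on a possibly non-$T_1$ space, so it first passes to the Kolmogorov quotient $G/\overline{e}$, which is $T_1$ and hence $T_2$ once regular, applies the chain ``regular Lindel\"of $\Rightarrow$ paracompact'' and ``paracompact $T_2$ $\Rightarrow$ normal'' there, and then transports normality back to $G$ via Lemma \ref{topropstokolmquotient}. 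You instead write out the standard two-sided shrinking argument and correctly observe that it uses only the closure form of regularity (separating a point from a closed set, with no $T_1$ hypothesis anywhere), so the Kolmogorov quotient and the paracompactness intermediary are unnecessary. This is a genuine, if modest, streamlining: your route is self-contained and avoids Lemma \ref{topropstokolmquotient} entirely, while the paper's route lets it cite off-the-shelf theorems whose textbook statements assume Hausdorffness. Your handling of the final clause ($T_3$ means regular and $T_1$, so normal and $T_1$, i.e.\ $T_4$) matches the paper's ``special case $e=\overline{e}$'' in substance. The separability of $\mathcal{P}(X)$ via the Hilbert cube embedding is fine and is exactly the fact the paper cites from Engelking.
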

\begin{proof}
Let $e$ denote the identity of $G=\piqtopx$. Recall that $G/\overline{e}$ is a $T_1$ quasitopological group. Since $G$ is regular by assumption, $G/\overline{e}$ is regular by Lemma \ref{topropstokolmquotient}. In particular, $G/\overline{e}$ is $T_2$. If $X$ is compact metric, then $\Omega(X,x_0)$ is a separable metric space \cite[4.2.17 \& 4.2.18]{Eng89} and is therefore Lindel\"{o}f. Note $G/\overline{e}$ is Lindel\"{o}f since it is the quotient of $\Omega(X,x_0)$. Since every regular Lindel\"{o}f space is paracompact and every paracompact $T_2$ space is normal, $G/\overline{e}$ is normal. It follows from Lemma \ref{topropstokolmquotient} that $G$ is normal.\\
\indent The second statement of the theorem is the special case where $e=\overline{e}$.
\end{proof}

\begin{corollary}\label{t3tot4topgrp}
If $X$ is a compact metric space and $\piqtopx$ is a topological group, then $\piqtopx$ is normal. In particular, if $\piqtopx$ is $T_1$, then $\piqtopx$ is $T_4$.
\end{corollary}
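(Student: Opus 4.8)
The plan is to obtain this as a quick consequence of Theorem~\ref{t3tot4}; the only additional ingredient is the classical fact that \emph{every} topological group is a regular topological space, with no separation hypothesis assumed. Concretely, if $G$ is a topological group with identity $e$ and $U$ is an open neighborhood of $e$, then continuity of multiplication produces a symmetric open neighborhood $V$ of $e$ with $V\cdot V\subseteq U$, and the standard argument (if $x\in\overline{V}$ then $xV\cap V\neq\emptyset$, so $x\in V\cdot V$) gives $\overline{V}\subseteq V\cdot V\subseteq U$. Hence $e$, and by homogeneity every point of $G$, has a neighborhood basis of closed sets, so $G$ is regular.

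Applying this to $G=\piqtopx$, which is a topological group by hypothesis, shows that $\piqtopx$ is a regular space. Since $X$ is a compact metric space, Theorem~\ref{t3tot4} then yields at once that $\piqtopx$ is normal. For the second assertion, if in addition $\piqtopx$ is $T_1$, then it is normal and $T_1$, hence $T_4$ by definition; equivalently, a regular $T_1$ space is $T_3$, so the ``in particular'' clause of Theorem~\ref{t3tot4} applies directly.

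I do not anticipate any genuine obstacle here: beyond Theorem~\ref{t3tot4}, the argument rests solely on the routine fact that topological groups are regular spaces. If one preferred to avoid even invoking that fact, an alternative route is to pass to the Kolmogorov quotient: $G/\overline{e}$ is a $T_1$ topological group, hence Tychonoff (as recalled in the introduction) and in particular regular, so Lemma~\ref{topropstokolmquotient} transfers regularity back to $G$, and one finishes as above.
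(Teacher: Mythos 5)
Your proposal is correct and follows essentially the same route as the paper: both reduce to Theorem~\ref{t3tot4} by establishing that $\piqtopx$ is regular. The only (harmless) difference is that you prove regularity of an arbitrary topological group directly via the symmetric-neighborhood argument, whereas the paper passes to the Tychonoff quotient $G/\overline{e}$ and pulls regularity back through Lemma~\ref{topropstokolmquotient} --- precisely the alternative you already note at the end.
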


\begin{proof}
If $G=\piqtopx$ is a topological group with identity $e$, then $G/\overline{e}$ is a Tychonoff topological group. Thus $G$ is regular by Lemma \ref{topropstokolmquotient} and is therefore normal by Theorem \ref{t3tot4}. The second statement of the corollary is the special case where $e=\overline{e}$.
\end{proof}
As mentioned in the introduction, $\piqtopx$ often fails to be a topological group. In this case, Corollary \ref{t3tot4topgrp} does not apply. We are left with the following fundamental open problem.
\begin{problem}\label{openq1}
\emph{
If $X$ is a Peano continuum such that $\piqtopx$ is $T_1$ must $\piqtopx$ be $T_4$? Must $\piqtopx$ be normal for every compact metric space $X$?
}
\end{problem}
Theorem \ref{t3tot4} reduces this problem to the problem of deciding whether or not $\piqtopx$ must be regular.
\section{When is $\pi_{1}^{qtop}(X,x_0)$ a topological group?}

The topological properties of a group $G$ (endowed with a topology) which allow one to conclude that group multiplication is continuous are well-studied, e.g. \cite{Bouziad}\cite{Bouziad2}\cite{Ellis1}\cite{Mont}. For instance, the celebrated Ellis Theorem \cite{Ellis1} implies that a locally compact $T_2$ quasitopological group is a topological group. We consider such properties when $G$ is a quasitopological fundamental group.

It is known that there are many examples of spaces $X$ for which $\pi_{1}^{qtop}(X,x_0)$ is not a topological group. The following examples illustrate the variety of spaces for which this phenomenon can occur.
\begin{example} 
\emph{ \cite{Fabhe}
The Hawaiian earring $\mathbb{H}$ described in Example \ref{he1} is a $\pi_1$-shape injective Peano continuum, however, $\piqtop(\mathbb{H},(0,0))$ fails to be a topological group.
}
\end{example}
The following pair of spaces illustrate how the success or failure of group multiplication can distinguish homotopy type when standard application of shape theory fails to do so.
\begin{example}\label{lasso}
\emph{For $n\geq 1$, let $D_n\subseteq \mathbb{R}^2$ be the circle of radius $\frac{1}{(n+1)^2}$ centered at $\left(1,\frac{1}{n}\right)$. Let $c_n$ be the point in $D_n$ nearest the origin and $L_n=[0,c_n]$ be the line segment connecting the origin to $c_n$. The \textbf{lasso space} \[\mathbb{L}=\left([0,1]\times \{0\}\right)\cup \left(\bigcup_{n\geq 1}L_n\cup D_n\right)\] is a compact and semilocally simply connected, but non-locally path connected, planar set (See Figure 2) such that $\piqtop(\mathbb{L},(0,0))$ is free on countable generators (as an abstract group) but is not a topological group \cite{Fabcgqtop}.\\
\indent We use the Lasso space to show that continuity of multiplication in quasitopological fundamental groups can distinguish shape equivalent spaces. Compare $\mathbb{L}$ with the following construction, which is equivalent to the main example in \cite{Fabe06}: Let $M_n$ be the line segment connecting the origin to $\left(1,\frac{1}{n}\right)$. Define the planar set
 \[Y=\left([0,1]\times \{0\}\right)\cup \left(\bigcup_{n\geq 1}M_n\right)\cup\left(\bigcup_{n\geq 1}\{1\}\times \left[\frac{1}{2n-1},\frac{1}{2n}\right]\right).\]
Note that $Y$ and $\mathbb{L}$ are both weakly homotopy equivalent and shape equivalent, however, arguments in \cite{Brazfretopgrp} may be used to show that $\piqtopy$ is a topological group (namely, the free Graev topological group on the one-point compactification of the natural numbers). Thus $\piqtop(\mathbb{L},(0,0))$ and $\piqtop(Y,(0,0))$ are not isomorphic quasitopological groups. It follows from the homotopy invariance of $\piqtop$ that $\mathbb{L}$ and $Y$ are not homotopy equivalent.
}
\end{example}
\begin{figure}[H]
\centering \includegraphics[height=1.5in]{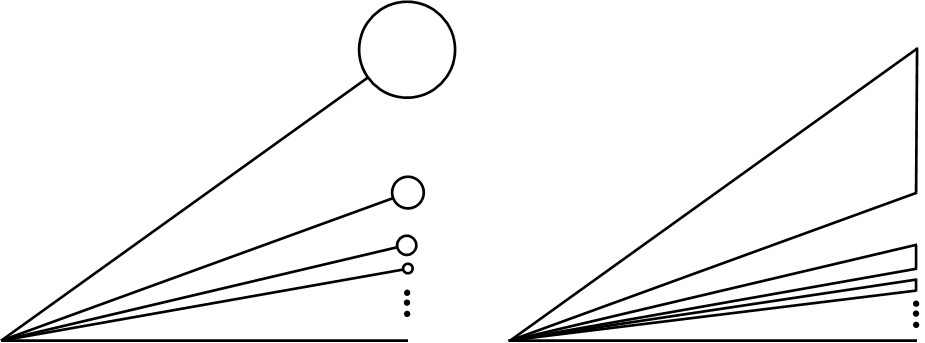}
\caption{The lasso space $\mathbb{L}$ (left) and $Y$ (right).}
\end{figure}
\begin{example}\label{hoop}
\emph{ \cite{Brazfretopgrp} For any subspace $A\subset [0,1]$, the \textbf{hoop space} (or \textbf{generalized wedge of circles}) on $A$ is the planar set
\begin{equation*}
hp(A)=\bigcup_{a\in A}\{(x,y)\in \mathbb{R}^2|(x-a)^2+y^2=(1+a)^2\}
\end{equation*}
If $A=\mathbb{Q}\cap (0,1)$, then $hp(A)$ is locally simply connected and has a fundamental group freely generated by a countable set (namely $\pi_0(A)=A$), however, $\pi_{1}^{qtop}(hp(A),(0,0))$ fails to be a topological group. On the other hand, if $A$ is compact and totally path disconnected (e.g. a cantor set or countable compact set), then $\pi_{1}^{qtop}(hp(A),(0,0))$ is a topological group (See Proposition \ref{komega} below).
}
\end{example}

Note $\piqtopx$ also fails to be a topological group if $X$ contains any of the above examples (of failure) as a retract.

The fact that $\piqtopx$ is not always topological group is intimately related to the fact that the product of quotient maps
\begin{equation*}
\pi\times \pi:\Omega(X,x_0)\times \Omega(X,x_0)\to
\pi_{1}^{qtop}(X,x_0)\times \pi_{1}^{qtop}(X,x_0)
\end{equation*}
can fail to be quotient. In the case that $\pi\times \pi$ is quotient, the continuity of multiplication in $\piqtopx$ follows from the universal property of quotient spaces.
\begin{lemma}\label{quotientstocontmult}
If $\pi\times \pi:\Omega(X,x_0)\times \Omega(X,x_0)\to
\piqtopx\times \piqtopx$ is a quotient map, then $\piqtopx$ is a topological group.
\end{lemma}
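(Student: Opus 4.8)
The plan is to deduce continuity of multiplication directly from the universal property of quotient maps, using the fact that concatenation of based loops is continuous. First I would record the classical observation that the concatenation map
\[
m:\Omega(X,x_0)\times\Omega(X,x_0)\to\Omega(X,x_0),\qquad m(\alpha,\beta)=\alpha\cdot\beta,
\]
is continuous for the compact-open topologies. One clean way to see this is to check directly that the preimage $m^{-1}(\langle K,U\rangle)$ of a subbasic set equals $\langle K_1,U\rangle\times\langle K_2,U\rangle$, where $K_1=2(K\cap[0,\tfrac12])$ and $K_2=2(K\cap[\tfrac12,1])-1$ are compact; this product is open, and such sets form a subbasis for the target. (Every pair of based loops is composable, so no restriction to a subspace of composable pairs is needed.)

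Next I would observe that the square
\[
\xymatrix{
\Omega(X,x_0)\times\Omega(X,x_0) \ar[r]^-{m} \ar[d]_-{\pi\times\pi} & \Omega(X,x_0) \ar[d]^-{\pi}\\
\piqtopx\times\piqtopx \ar[r]^-{\mu} & \piqtopx
}
\]
commutes, where $\mu$ denotes group multiplication, since $\pi(m(\alpha,\beta))=[\alpha\cdot\beta]=[\alpha][\beta]=\mu(\pi(\alpha),\pi(\beta))$. Hence $\mu\circ(\pi\times\pi)=\pi\circ m$ is continuous, being a composite of continuous maps. Because $\mu$ is well defined, the map $\pi\circ m$ is constant on the fibres of the surjection $\pi\times\pi$; since $\pi\times\pi$ is assumed to be a quotient map, the universal property of quotient maps then yields that the induced map $\mu$ is itself continuous. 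Inversion $[\alpha]\mapsto[\alpha^{-}]$ is already continuous because $\piqtopx$ is a quasitopological group (the lemma of Section 2), so $\piqtopx$ is a topological group.

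I do not anticipate a real obstacle here: the statement is essentially a formal consequence of the quotient presentation of $\piqtopx$, and the only ingredient that is not pure diagram-chasing is the continuity of $m$, which is standard. The genuinely difficult matter is the hypothesis itself — deciding for which $X$ the product map $\pi\times\pi$ is a quotient map — but that lies in the surrounding discussion rather than in this lemma.
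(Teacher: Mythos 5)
Your proof is correct and takes essentially the same route as the paper: the paper uses exactly the same commutative square and invokes the universal property of quotient maps, simply taking the continuity of loop concatenation as known rather than verifying it on subbasic sets. The extra details you supply (the computation of $m^{-1}(\langle K,U\rangle)$ and the explicit remark that inversion is already continuous) are accurate and harmless.
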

\begin{proof}
Suppose $\pi\times \pi$ is quotient. Since $\piqtopx$ is a quasitopological group it suffices to check that multiplication is jointly continuous. Consider the diagram
\[\xymatrix{ \Omega(X,x_0)\times \Omega(X,x_0) \ar[d]_{\pi\times \pi} \ar[r] &\Omega(X,x_0) \ar[d]^-{\pi}\\
\piqtopx\times \piqtopx \ar[r] & \piqtopx}\]
where the top and bottom horizontal maps are loop concatenation and group multiplication respectively. Since the top composition is continuous and $\pi\times \pi$ is quotient, group multiplication is continuous by the universal property of quotient spaces. Thus $\piqtopx$ is a topological group.
\end{proof}
\begin{lemma}\label{prodofquotientchar}
For any space $X$, the following are equivalent:
\begin{enumerate}
\item $\pi\times \pi:\Omega(X,x_0)\times \Omega(X,x_0)\to
\piqtopx\times \piqtopx$ is a quotient map.

\item The canonical group isomorphism $\rho:\piqtop(X\times
X,(x_0,x_0))\to\piqtopx\times \piqtopx$ is a
homeomorphism.

\item $\piqtop(X\times X,(x_0,x_0))$ is a topological group.
\end{enumerate}

If any of these conditions hold, then $\piqtopx$ is a
topological group.
\end{lemma}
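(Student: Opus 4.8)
The plan is to reduce everything to statements about $X$ alone by exploiting the canonical homeomorphism $\Omega(X\times X,(x_0,x_0))\cong\Omega(X,x_0)\times\Omega(X,x_0)$. First I would record that a loop $f:\ui\to X\times X$ is the same datum as the pair $(p_1\circ f,p_2\circ f)$ of its coordinate loops, and that this bijection is a homeomorphism for the compact-open topology (the standard exponential identification $\mathrm{Map}(\ui,A\times B)\cong\mathrm{Map}(\ui,A)\times\mathrm{Map}(\ui,B)$, restricted to loops based at $(x_0,x_0)$). It is natural in the evident sense, and under it the canonical quotient map $\pi_{X\times X}:\Omega(X\times X,(x_0,x_0))\to\pi_{1}^{qtop}(X\times X,(x_0,x_0))$ becomes, on underlying sets, the map $(\alpha,\beta)\mapsto([\alpha],[\beta])$, with the target carrying the quotient topology. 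Since $\rho=((p_1)_\ast,(p_2)_\ast)$ is induced by the projections $p_1,p_2:X\times X\to X$, it is a continuous group isomorphism (using the functoriality lemma for $\pi_1^{qtop}$), and naturality gives $\rho\circ\pi_{X\times X}=\pi\times\pi$ as maps into $\piqtopx\times\piqtopx$ (product topology).

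For $(1)\Leftrightarrow(2)$: $\pi_{X\times X}$ is a quotient map by definition and $\rho$ is a continuous surjection with $\rho\circ\pi_{X\times X}=\pi\times\pi$, so by the standard fact that a continuous surjection $g$ following a quotient surjection $h$ satisfies ``$g\circ h$ quotient $\iff$ $g$ quotient,'' we get that $\pi\times\pi$ is quotient iff $\rho$ is quotient; as $\rho$ is a continuous bijection, this is the same as $\rho$ being a homeomorphism. The final assertion of the lemma, and $(1)\Rightarrow$ ``$\piqtopx$ is a topological group,'' is then exactly Lemma \ref{quotientstocontmult}. This also yields $(1)\Rightarrow(3)$: granting $(1)$, $\piqtopx$ is a topological group, hence so is $\piqtopx\times\piqtopx$, and by $(1)\Rightarrow(2)$ the group isomorphism $\rho$ is a homeomorphism, so it transports this topological group structure back to $\pi_1^{qtop}(X\times X,(x_0,x_0))$.

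The crux is $(3)\Rightarrow(2)$: producing a continuous inverse to $\rho$ from the bare hypothesis that multiplication $\mu$ in $G:=\pi_1^{qtop}(X\times X,(x_0,x_0))$ is jointly continuous. I would use the two based inclusions $i_1:X\to X\times X$, $x\mapsto(x,x_0)$, and $i_2:X\to X\times X$, $x\mapsto(x_0,x)$, which induce continuous homomorphisms $(i_1)_\ast,(i_2)_\ast:\piqtopx\to G$, and form the composite
\[
\piqtopx\times\piqtopx\xrightarrow{\ (i_1)_\ast\times(i_2)_\ast\ }G\times G\xrightarrow{\ \mu\ }G .
\]
This is continuous (a product of continuous maps followed by $\mu$, which is continuous by $(3)$), and on underlying sets it sends $([\alpha],[\beta])$ to $[(\alpha,c_{x_0})]\cdot[(c_{x_0},\beta)]=[(\alpha\cdot c_{x_0},\,c_{x_0}\cdot\beta)]=[(\alpha,\beta)]$. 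Since $p_1 i_1=p_2 i_2=\mathrm{id}_X$ while $p_1 i_2$ and $p_2 i_1$ are constant, one checks $\rho\big((i_1)_\ast[\alpha]\cdot(i_2)_\ast[\beta]\big)=([\alpha],[\beta])$, so the displayed composite is precisely $\rho^{-1}$; hence $\rho^{-1}$ is continuous and $\rho$ is a homeomorphism. The only point requiring care is this last bookkeeping identity together with the verification that the exponential identification genuinely intertwines $\pi_{X\times X}$ with $(\alpha,\beta)\mapsto([\alpha],[\beta])$; everything else is formal.
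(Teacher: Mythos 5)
Your proposal is correct and follows essentially the same route as the paper: the identification $\Omega(X\times X,(x_0,x_0))\cong\Omega(X,x_0)\times\Omega(X,x_0)$ intertwining $\pi_{X\times X}$ with $\pi\times\pi$ for $(1)\Leftrightarrow(2)$, the composite $\mu\circ\left((i_1)_{\ast}\times(i_2)_{\ast}\right)$ as the continuous inverse of $\rho$ for $(3)\Rightarrow(2)$, and Lemma \ref{quotientstocontmult} for $(1)\Rightarrow(3)$ and the final assertion. The extra bookkeeping you flag (that $[(\alpha,c_{x_0})]\cdot[(c_{x_0},\beta)]=[(\alpha,\beta)]$) is exactly the verification the paper leaves implicit.
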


\begin{proof}
1. $\Leftrightarrow$ 2. Consider the commuting diagram \[\xymatrix{ \Omega(X\times X,(x_0,x_0)) \ar[d]_{\pi} \ar[r]^-{\cong} &\Omega(X,x_0)\times \Omega(X,x_0) \ar[d]^-{\pi\times \pi}\\
\piqtop(X\times X,(x_0,x_0)) \ar[r]_-{\rho} & \piqtopx\times \piqtopx}\]where top map is the canonical homeomorphism and the bottom map is the canonical continuous group isomorphism $[(\alpha,\beta)]\mapsto ([\alpha],[\beta])$. Since the left vertical map is quotient, the right vertical map is quotient iff $\rho$ is a homeomorphism.\\
3. $\Rightarrow$ 2. Since $\rho$ is a continuous group isomorphism, it suffices to show the inverse is continuous. Let $\mu$ be the continuous multiplication map of $\piqtop(X\times X,(x_0,x_0))$. The inclusions $i,j:X\to X\times X$, $i(x)=(x,x_0)$ and $j(x)=(x_0,x)$ induce the continuous homomorphisms $i_{\ast},j_{\ast}:\piqtopx\to \piqtop(X\times X,(x_0,x_0))$ given by $i_{\ast}([\alpha])=[(\alpha,c_{x_0})]$ and $j_{\ast}([\beta])=[(c_{x_0},\beta)]$. The composition $\mu\circ (i_{\ast}\times j_{\ast})$ is continuous and is the inverse of $\rho$.\\
1. $\Rightarrow$ 3. If $\pi\times \pi$ is quotient, then $\piqtopx$ is a topological group by Lemma \ref{quotientstocontmult}. Since $\rho$ is a homeomorphism (by 1. $\Rightarrow$ 2.), $\piqtop(X\times X,(x_0,x_0))$ is isomorphic to the product of topological groups and is therefore a topological group.\\

\indent The last statement follows from the previous Lemma.
\end{proof}

The authors do not know of a space $X$ such that $\piqtopx$ is a topological group but for which $\pi\times \pi$ fails to be a quotient map.

The fact that multiplication in $\piqtopx$ may fail to be continuous is not a result of the definition of quotient topology but rather is due to the fact that the usual category of topological spaces is not Cartesian closed. A common approach to dealing with this categorical issue is to coreflect (in the functorial sense) to a Cartesian closed category of topological spaces \cite{Brown06}\cite{Strickland}.

\begin{definition}
\emph{A subset $A$ of a topological space $X$ is said to be \textbf{k-closed} if for each compact $T_2$ space $K$ and map $t:K\to X$, $t^{-1}(A)$ is closed in $K$. The space $X$ is \textbf{compactly generated}
if every k-closed set is closed in $X$. Let $\mathbf{CG}$ denote the
category of compactly generated spaces and continuous functions.}
\end{definition}

A space $X$ is compactly generated iff $X$ is the quotient space of a topological sum of compact $T_2$ spaces. For any space $X$, one can refine the topology of $X$ to contain all k-closed sets of $X$ and obtain a compactly generated space $\mathbf{k}X$. Note that $\mathbf{k}X=X$ iff $X$ is compactly generated. It is well-known that the category of compactly generated spaces
forms a convenient, Cartesian closed category of topological spaces when the k-product $X\times_{k}Y=\mathbf{k}(X\times Y)$ is used \cite{BT}\cite{Strickland}. Moreover, $\mathbf{CG}$ contains all sequential spaces (and thus all first countable spaces). We use the following Lemma, which is often considered to be an advantage of implementing the compactly generated category.

\begin{lemma}
\label{cgquotient}\cite{Strickland} The quotient of a compactly generated
space is compactly generated. Additionally, if $q:X\to Y$ and $q^{\prime
}:X^{\prime }\to Y^{\prime }$ are quotient maps of compactly generated
spaces, then $q\times_{k}q:X\times_{k}X\to Y\times_{k}Y^{\prime }$ is a
quotient map.
\end{lemma}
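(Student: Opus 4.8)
The plan is to handle the two assertions of Lemma~\ref{cgquotient} separately: the first by an elementary saturation argument, and the second by organizing everything around the Cartesian closed structure of $\mathbf{CG}$.

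For the first assertion, let $q\colon X\to Y$ be a quotient map with $X$ compactly generated and let $A\subseteq Y$ be k-closed; we must show $A$ is closed. Since $q$ is a quotient map, $A$ is closed in $Y$ iff $q^{-1}(A)$ is closed in $X$, and since $X$ is compactly generated, $q^{-1}(A)$ is closed in $X$ iff it is k-closed in $X$. For any compact $T_2$ space $K$ and map $s\colon K\to X$ one has $s^{-1}(q^{-1}(A))=(q\circ s)^{-1}(A)$, which is closed in $K$ because $A$ is k-closed and $q\circ s$ is continuous. Hence $q^{-1}(A)$ is k-closed, so closed, so $A$ is closed and $Y$ is compactly generated. (Equivalently: present $X$ as a quotient $\coprod_{\alpha}K_{\alpha}\to X$ of a topological sum of compact $T_2$ spaces; then the composite $\coprod_{\alpha}K_{\alpha}\to Y$ is again a quotient map, exhibiting $Y$ as compactly generated.)

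For the second assertion, first factor
\[
q\times_{k}q'\colon\ X\times_{k}X'\ \xrightarrow{\ q\times_{k}\mathrm{id}_{X'}\ }\ Y\times_{k}X'\ \xrightarrow{\ \mathrm{id}_{Y}\times_{k}q'\ }\ Y\times_{k}Y',
\]
so that, since a composite of quotient maps is a quotient map, it suffices to show $q\times_{k}\mathrm{id}_{Z}$ is a quotient map whenever $q\colon X\to Y$ is a quotient map of compactly generated spaces and $Z$ is compactly generated. Now a quotient map $q$ with compactly generated domain is a regular epimorphism in $\mathbf{CG}$: it is the coequalizer, formed in $\mathbf{CG}$, of (a $\mathbf{k}$-ified version of) its kernel pair $R\rightrightarrows X$, the point being that by the first assertion the ordinary quotient $X/R$ is already compactly generated, so the $\mathbf{CG}$-coequalizer coincides with it. Because $\mathbf{CG}$ with $\times_{k}$ is Cartesian closed, the functor $-\times_{k}Z\colon\mathbf{CG}\to\mathbf{CG}$ has a right adjoint (the internal mapping space out of $Z$) and therefore preserves colimits, in particular coequalizers. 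Applying $-\times_{k}Z$ to the diagram $R\rightrightarrows X\to Y$ yields a coequalizer diagram $R\times_{k}Z\rightrightarrows X\times_{k}Z\to Y\times_{k}Z$, so $q\times_{k}\mathrm{id}_{Z}$ is a regular epimorphism in $\mathbf{CG}$; as its target is compactly generated, this is exactly the statement that it is a quotient map of spaces.

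The one genuinely substantive input is the clause ``$\mathbf{CG}$ with $\times_{k}$ is Cartesian closed'': what it actually requires is the construction of the exponentials $Z\Rightarrow W=\mathbf{k}\big(\mathrm{Map}(Z,W)\big)$ together with the adjunction $\mathbf{CG}(X\times_{k}Z,W)\cong\mathbf{CG}(X,Z\Rightarrow W)$, which is the classical ``convenient category'' theorem already invoked in the paragraph preceding Lemma~\ref{cgquotient}. A more hands-on alternative for the step ``$q\times_{k}\mathrm{id}_{Z}$ is a quotient map'' is: when $Z=K$ is compact $T_2$ one has $X\times_{k}K=X\times K$, since the ordinary product of a k-space with a locally compact $T_2$ space is again a k-space, so Whitehead's theorem that $p\times\mathrm{id}_{K}$ is a quotient map whenever $p$ is a quotient map and $K$ is locally compact $T_2$ applies directly; for general compactly generated $Z$ one writes $Z$ as a quotient of a sum of compact $T_2$ spaces and bootstraps along this presentation. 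Rigorously commuting $-\times_{k}-$ past that colimit, however, again amounts to the adjoint-functor fact above, so the categorical formulation is the cleaner one to record.
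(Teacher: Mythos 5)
The paper offers no proof of this lemma; it is quoted from \cite{Strickland} as a known fact, so there is nothing internal to compare against. Your argument is a correct, essentially self-contained derivation. The first assertion is handled by the standard saturation argument (k-closed sets pull back to k-closed, hence closed, sets under $q$, so their images under the quotient relation are closed), and either of your two versions works. For the second assertion, the factorization of $q\times_k q'$ through $Y\times_k X'$ correctly reduces the problem to $q\times_k \mathrm{id}_Z$, and the key steps check out: a quotient map between compactly generated spaces is the $\mathbf{CG}$-coequalizer of its kernel pair precisely because, by the first assertion, the $\mathbf{Top}$-coequalizer is already compactly generated (and replacing the kernel pair by its $\mathbf{k}$-ification does not change the coequalizer); the functor $-\times_k Z$ preserves coequalizers because Cartesian closedness of $(\mathbf{CG},\times_k)$ makes it a left adjoint; and a $\mathbf{CG}$-coequalizer map is a topological quotient map since $\mathbf{CG}$-coequalizers are computed as $\mathbf{Top}$-quotients. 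The only external input is the convenient-category theorem itself, which the paper also cites rather than proves, so your proof is at the right level of rigor. You are also right that the ``hands-on'' route via Whitehead's theorem for locally compact factors, bootstrapped along a presentation of $Z$ as a quotient of a sum of compact Hausdorff spaces, secretly reuses the same colimit-preservation fact; recording the categorical version is the cleaner choice. One cosmetic note: the lemma as printed has typographical slips ($q\times_k q$ for $q\times_k q'$ and $X\times_k X$ for $X\times_k X'$), which you have correctly read through.
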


\begin{definition}
\emph{A $\mathbf{CG}$\textbf{-group} is a group object in $\mathbf{CG}$,
that is, a compactly generated group $G$ such that inverse $G\to G$ is
continuous and multiplication $G\times_k G\to G$ is continuous with respect
to the k-product.}
\end{definition}

\begin{lemma}
\label{prodofquot} If $q:X\to Y$ is a quotient map of spaces and $X\times X$
is compactly generated, then the product $q\times q:X\times X\to Y\times Y$
is a quotient map iff $Y\times Y$ is compactly generated.
\end{lemma}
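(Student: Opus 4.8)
The plan is to prove the two implications separately, leaning on the compactly generated machinery recalled above, and in particular on Lemma \ref{cgquotient}.

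The forward implication is immediate: if $q\times q:X\times X\to Y\times Y$ is a quotient map, then $Y\times Y$ is the quotient of the compactly generated space $X\times X$, so it is compactly generated by the first assertion of Lemma \ref{cgquotient}.

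For the reverse implication, assume $Y\times Y$ is compactly generated. The crucial preliminary step is to promote the hypothesis ``$X\times X$ is compactly generated'' to ``$X$ is compactly generated.'' When $X=\emptyset$ everything is trivial; otherwise fix $x_0\in X$ and note that $x\mapsto (x,x_0)$ is a continuous section of the first projection $\mathrm{pr}_1:X\times X\to X$, so $X$ is a retract of $X\times X$. Since a retract of a compactly generated space is compactly generated (a short argument: the preimage in $X\times X$ of a k-closed subset of $X$ is k-closed, hence closed, and pulling it back along the section recovers the original set), $X$ is compactly generated. Then $Y$, being the quotient of the compactly generated space $X$ under $q$, is also compactly generated by Lemma \ref{cgquotient}. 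Now $X$ and $Y$ are compactly generated, and because $X\times X$ and $Y\times Y$ are compactly generated we have $X\times_k X=\mathbf{k}(X\times X)=X\times X$ and $Y\times_k Y=\mathbf{k}(Y\times Y)=Y\times Y$. Applying the second assertion of Lemma \ref{cgquotient} with both quotient maps taken to be $q:X\to Y$ shows that $q\times_k q:X\times_k X\to Y\times_k Y$ is a quotient map; under the above identifications this is precisely the assertion that $q\times q:X\times X\to Y\times Y$ is a quotient map.

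I expect the only point requiring care — the ``main obstacle'' — to be exactly this bookkeeping around the hypotheses of Lemma \ref{cgquotient}: that lemma requires all of the spaces involved to be compactly generated, whereas we are handed only the compact generation of the two products. The retract observation is what bridges the gap, upgrading compact generation of $X\times X$ to that of $X$ and then, via the quotient map $q$, to that of $Y$. Once this is in place, the k-product collapses to the ordinary product on both sides and the proof closes with no further work.
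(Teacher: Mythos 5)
Your proof is correct and follows essentially the same route as the paper: the forward direction from the first part of Lemma \ref{cgquotient}, and the reverse direction by first deducing that $X$ and $Y$ are compactly generated, identifying $X\times_k X$ with $X\times X$ and $Y\times_k Y$ with $Y\times Y$, and applying the second part of Lemma \ref{cgquotient}. The only (immaterial) difference is that you obtain compact generation of $X$ via the retract/section argument, whereas the paper simply notes that $X$ and $Y$ are quotients of $X\times X$ and $Y\times Y$ under the (open, surjective) projections.
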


\begin{proof}
One direction is obvious from the first statement of Lemma \ref{cgquotient}. Suppose $Y\times Y$ is compactly generated. Note both $X$ and $Y$ are compactly generated (as quotients of $X\times X$ and $Y\times Y$ respectively). Since the products $X\times X$ and $Y\times Y$ are compactly generated, the direct product and k-product topologies agree, i.e. $X\times X=X\times_{k}X$ and $Y\times Y=Y\times_{k}Y$. But $q\times_{k}q:X\times X\to Y\times_{k}Y'$ is quotient by Lemma \ref{cgquotient} and this is precisely $q\times q:X\times X\to Y\times Y$.
\end{proof}

\begin{theorem}
\label{cggroups} If $X$ is metrizable, then $\piqtopx$ is a $\mathbf{CG}$-group. Moreover, if $\piqtopx\times \piqtopx$ is compactly generated, then $\piqtopx$
is a topological group.
\end{theorem}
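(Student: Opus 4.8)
The plan is to establish the two assertions separately, using the categorical machinery already assembled. First I would prove that $\piqtopx$ is a $\mathbf{CG}$-group whenever $X$ is metrizable. The key observation is that $\Omega(X,x_0)$ is metrizable when $X$ is, since the compact-open topology on $\mathcal{P}(X)$ coincides with the topology of uniform convergence for metric $X$ (as noted in Section 2), and $\Omega(X,x_0)$ is a subspace of $\mathcal{P}(X)$. Every metrizable space is first countable, hence sequential, hence compactly generated; so $\Omega(X,x_0)\in\mathbf{CG}$. Since $\piqtopx$ is a quotient of $\Omega(X,x_0)$, the first statement of Lemma \ref{cgquotient} gives that $\piqtopx$ is compactly generated. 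To upgrade this to a $\mathbf{CG}$-group, I need inversion to be continuous (this is immediate: $\piqtopx$ is already a quasitopological group) and multiplication $\piqtopx\times_k\piqtopx\to\piqtopx$ to be continuous with respect to the k-product. For this I would run the argument of Lemma \ref{quotientstocontmult} in the compactly generated category: the square
\[\xymatrix{
\Omega(X,x_0)\times_k\Omega(X,x_0) \ar[d]_{\pi\times_k\pi} \ar[r] & \Omega(X,x_0) \ar[d]^{\pi}\\
\piqtopx\times_k\piqtopx \ar[r] & \piqtopx
}\]
commutes, the top map (loop concatenation) is continuous, and $\pi\times_k\pi$ is a quotient map by the second statement of Lemma \ref{cgquotient} (both factors are quotients of compactly generated spaces). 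Hence the bottom map, group multiplication, is continuous with respect to $\times_k$, which is exactly the assertion that $\piqtopx$ is a $\mathbf{CG}$-group.

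For the second statement, suppose in addition that $\piqtopx\times\piqtopx$ is compactly generated. Then I would apply Lemma \ref{prodofquot} with $q=\pi:\Omega(X,x_0)\to\piqtopx$ and $X$ replaced by $\Omega(X,x_0)$: we need $\Omega(X,x_0)\times\Omega(X,x_0)$ to be compactly generated, and indeed it is, being metrizable (a finite product of metrizable spaces) hence first countable. Since $\piqtopx\times\piqtopx$ is compactly generated by hypothesis, Lemma \ref{prodofquot} yields that $\pi\times\pi$ is a quotient map. Lemma \ref{quotientstocontmult} then gives immediately that $\piqtopx$ is a topological group.

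The only point requiring care — and the step I expect to be the main obstacle — is the verification that $\Omega(X,x_0)$ and $\Omega(X,x_0)\times\Omega(X,x_0)$ are genuinely metrizable (equivalently, first countable) when $X$ is metrizable, so that the hypotheses of Lemmas \ref{cgquotient} and \ref{prodofquot} are met. This hinges on the identification of the compact-open topology on the path space with the uniform topology, which is classical but worth citing, together with the elementary facts that subspaces and finite products of metrizable spaces are metrizable. Everything else is a bookkeeping exercise in threading the two already-proven lemmas through the compactly generated category. If one wants the special-case remark from the introduction (that a $T_2$ $k_\omega$-group $\piqtopx$ is a $T_4$ topological group), it follows by combining this theorem with the fact that $k_\omega$-spaces are compactly generated and that their squares are again $k_\omega$, hence compactly generated, together with Corollary \ref{t3tot4topgrp}.
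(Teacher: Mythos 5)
Your proposal is correct and follows essentially the same route as the paper's proof: both parts use the metrizability of $\Omega(X,x_0)$ and of its square to place everything in $\mathbf{CG}$, invoke Lemma \ref{cgquotient} to make $\pi\times_k\pi$ a quotient for the $\mathbf{CG}$-group claim, and then combine Lemma \ref{prodofquot} with Lemma \ref{quotientstocontmult} for the topological-group claim.
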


\begin{proof}
Recall that for a given metric on $X$, the compact-open topology of $\Omega(X,x_0)$ agrees with the topology induced by the uniform metric. Since $\Omega(X,x_0)$ is compactly generated, the quotient space $\piqtopx$ is compactly generated. The product $\Omega(X,x_0)\times \Omega(X,x_0)$ is metrizable and therefore compactly generated. Thus $\Omega(X,x_0)\times \Omega(X,x_0)=\Omega(X,x_0)\times_k \Omega(X,x_0)$. Consider the commuting diagram \[\xymatrix{
\Omega(X,x_0)\times \Omega(X,x_0) \ar[d]_-{\pi\times_{k}\pi} \ar[rr] &&\Omega(X,x_0) \ar[d]_-{\pi} \\
\piqtopx\times_{k}\piqtopx \ar[rr] && \piqtopx
}\]where the horizontal maps are loop concatenation and group multiplication. Since the $\pi\times_{k}\pi$ is quotient and the top composition is continuous, group multiplication is continuous by the universal property of quotient spaces. We have already established that inversion is continuous. Thus $\piqtopx$ is a $\cg$-group.

If the direct product $\piqtopx\times\piqtopx$ is compactly generated, then Lemma \ref{prodofquot} implies the direct product $\pi\times \pi$ is quotient. By Lemma \ref{quotientstocontmult}, $\piqtopx$ is a topological group.
\end{proof}

Since first countable spaces and their (finite) products are compactly generated, it follows that if $X$ is metrizable and $\piqtopx $ is first countable, then $\piqtopx$ is
a topological group. Moreover, it is well known that a first countable
topological groups is pseudometrizable (and metrizable if it is $T_0$).

\begin{corollary}
If $X$ is metrizable, then either $\piqtopx$ is a pseudometrizable topological group or $\piqtopx$ is not first countable.
\end{corollary}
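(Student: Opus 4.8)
The plan is to split into cases according to whether $\piqtopx$ is first countable. If it is not, then the second alternative in the statement holds and there is nothing further to prove; so the entire content lies in the first-countable case, where I must show that first countability of $\piqtopx$ forces it to be a pseudometrizable topological group.

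So suppose $\piqtopx$ is first countable. First I would observe that the finite product $\piqtopx\times\piqtopx$ is then also first countable, since first countability passes to finite (indeed countable) products. Every first countable space is sequential and hence compactly generated, so $\piqtopx\times\piqtopx$ is compactly generated. Because $X$ is metrizable, the hypotheses of Theorem \ref{cggroups} are now met, and that theorem yields that $\piqtopx$ is a topological group.

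It then remains only to promote ``first countable topological group'' to ``pseudometrizable topological group.'' Here I would invoke the Birkhoff--Kakutani metrization theorem: a topological group in which the identity has a countable neighborhood basis carries a left-invariant pseudometric inducing its topology (and a genuine metric when the group is moreover $T_0$). Since $\piqtopx$ is first countable, its identity has a countable neighborhood basis, so this theorem applies and $\piqtopx$ is pseudometrizable. This settles the first alternative and completes the proof.

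The argument is essentially an assembly, so I do not expect a serious obstacle: the only ingredient not already packaged in the excerpt is the Birkhoff--Kakutani theorem, which the surrounding discussion has already flagged as classical, and that is the step I expect to ``cost'' a citation rather than an argument. The mild subtlety worth stating explicitly is that first countability of $\piqtopx$ is used twice for two different purposes --- once to get compact generation of the square (hence joint continuity of multiplication via Theorem \ref{cggroups}) and once to get pseudometrizability of the resulting topological group.
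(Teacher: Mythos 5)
Your argument is correct and is essentially the paper's own: the paper likewise notes that a first countable $\piqtopx$ has a first countable (hence compactly generated) square, applies Theorem \ref{cggroups} to get a topological group, and then invokes the classical fact (Birkhoff--Kakutani) that first countable topological groups are pseudometrizable. No differences worth noting.
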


\begin{example}\emph{
We obtain an example of a non-discrete metrizable fundamental group when $X=\prod_{n}X_n$ is an infinite product of (non-simply connected) CW-complexes $X_n$ (e.g. $X_n=S^1$ for each $n$). Since $\piqtop(X_n,x_n)$ is discrete, $\pi:\Omega(X_n,x_n)\to \piqtop(X_n,x_n)$ is an open map and therefore $\prod_{n}\pi:\prod_{n}\Omega(X_n,x_n)\to \prod_{n}\piqtop(X_n,x_n)$ is open. Consider the following diagram 
\[\xymatrix{
\Omega(X,x_0) \ar[d]_-{\pi} \ar[r]^-{\cong}  & \prod_{n}\Omega(X_n,x_n) \ar[d]^-{\prod_{n}\pi}\\
\piqtopx \ar[r] &\prod_{n}\piqtop(X_n,x_n)
}\]
where the top horizontal map is the canonical homeomorphism and the bottom map is the canonical group isomorphism. Since both vertical maps are quotient, $\piqtopx\to  \prod_{n}\piqtop(X_n,x_n)$ is a homeomorphism. Thus $\piqtopx$ is isomorphic to an infinite product of discrete groups and is a metrizable topological group with a neighborhood base of open invariant subgroups at the identity.
}
\end{example}

A space $Y$ is a $k_{\omega}$\textbf{-space} if it is the inductive (or direct) limit of a
sequence $Y_1\subset Y_2\subset Y_3\subset...$ (called a $k_{\omega}$-decomposition) of compact subspaces $Y_n$. Equivalently, $\bigcup_{n}Y_n=Y$ has the weak topology with respect to the nested subsets $\{Y_n\}$. If each $Y_n$ is Hausdorff,
then $Y$ is the quotient of the topological sum $\coprod_{n}Y_n$ of compact
Hausdorff spaces and is therefore compactly generated. A $k_{\omega}$-group
is a quasitopological group whose underlying space is a $k_{\omega}$-space.
It is known that if $X$ and $Y$ have $k_{\omega}$-decomposition $\{X_n\}$
and $\{Y_n\}$, then $\{X_n\times Y_n\}$ is a $k_{\omega}$-decomposition for
the direct product $X\times Y$ (See the appendix of \cite{BHsubgroup}). Thus finite products of $k_{\omega}$-spaces are
compactly generated. In light of this fact, we apply Theorems \ref{cggroups} and \ref{t3tot4}.

\begin{proposition}\label{komega}
If $X$ is a metric space and $\piqtopx$ is a $T_2$ $k_{\omega}$-group, then $\piqtopx$ is a topological group. Moreover, if $X$ is compact, then $\piqtopx$ is $T_4$.
\end{proposition}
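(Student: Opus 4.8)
\textbf{Proof plan for Proposition \ref{komega}.}

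The plan is to deduce both statements from the machinery already assembled in the excerpt, with the only genuine work being the verification that the compactly-generated hypotheses are met so that Theorem \ref{cggroups} and Theorem \ref{t3tot4} apply. First I would observe that a $k_\omega$-space is compactly generated (as noted just before the proposition: if the compact pieces $Y_n$ are Hausdorff — which they are here, being subspaces of the $T_2$ group $\piqtopx$ — then $\piqtopx$ is a quotient of $\coprod_n Y_n$, hence in $\cg$). Next, using the cited fact that if $\{Y_n\}$ is a $k_\omega$-decomposition of $\piqtopx$ then $\{Y_n\times Y_n\}$ is a $k_\omega$-decomposition of $\piqtopx\times\piqtopx$, the direct product $\piqtopx\times\piqtopx$ is again a $k_\omega$-space with Hausdorff pieces and therefore compactly generated. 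Since $X$ is metrizable, Theorem \ref{cggroups} now applies: its ``moreover'' clause states precisely that if $\piqtopx\times\piqtopx$ is compactly generated then $\piqtopx$ is a topological group. This gives the first assertion.

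For the second assertion, suppose in addition that $X$ is compact (and metric). Then $\piqtopx$ is a $T_2$ topological group by the first part. By Corollary \ref{t3tot4topgrp}, a $T_2$ (hence $T_1$) quasitopological group which happens to be a topological group, over a compact metric $X$, is $T_4$; more directly, Corollary \ref{t3tot4topgrp} says that if $X$ is compact metric and $\piqtopx$ is a topological group, then $\piqtopx$ is normal, and since it is also $T_1$ it is $T_4$. So the second assertion is immediate once the first is in hand.

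The step I expect to require the most care is confirming that the product $\piqtopx\times\piqtopx$ is compactly generated: this is not automatic for general $k_\omega$-spaces without the Hausdorff (or at least $T_1$) hypothesis on the decomposition, so I would make explicit that the $T_2$ assumption on $\piqtopx$ is what supplies Hausdorffness of each $Y_n$, and then invoke the $k_\omega$-decomposition $\{Y_n\times Y_n\}$ of the product from the appendix of \cite{BHsubgroup}, together with the fact that a $k_\omega$-space with Hausdorff pieces is a quotient of a sum of compact Hausdorff spaces and hence lies in $\cg$. Everything else is a direct citation of Theorem \ref{cggroups} and Corollary \ref{t3tot4topgrp}.
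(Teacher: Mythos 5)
Your proposal is correct and follows essentially the same route as the paper: the discussion preceding the proposition establishes that a $T_2$ $k_{\omega}$-group and its square are compactly generated (via the $k_{\omega}$-decomposition $\{Y_n\times Y_n\}$ from \cite{BHsubgroup}), and the paper then cites Theorems \ref{cggroups} and \ref{t3tot4} exactly as you do (your use of Corollary \ref{t3tot4topgrp} is just the packaged form of Theorem \ref{t3tot4} for topological groups). Your explicit attention to the Hausdorffness of the compact pieces is the right point to flag and is consistent with the paper's hypotheses.
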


\begin{example}
\emph{\cite{Brazfretopgrp}
Let $C\subset [0,1]$ be the standard middle third Cantor set and $X=hp(C)$ be the hoop space on $C$ as constructed in Example \ref{hoop}. Then $\piqtop(hp(C),(0,0))$ is the free group $F(C)$ generated by the underlying set of $C$ and is the inductive limit of the compact subspaces $F_n(C)$ consisting
of reduced words $x_{1}^{\epsilon_1}x_{2}^{\epsilon_2}...x_{m}^{\epsilon_m}$, $x_i\in C$, $\epsilon_i=\pm 1$ where $m\leq n$ \cite{Brazfretopgrp}. In particular, $F_n(C)$ is
the quotient of the compact Hausdorff space $\coprod_{0\leq m\leq n}(C\sqcup
C^{-1})^{m}$. Thus $\piqtop(hp(C),(0,0))$ is a topological group by Proposition \ref{komega}. In fact, $\piqtop(hp(C),(0,0))$ is isomorphic to the free Markov topological group on $C$. The same argument can be applied to any zero-dimensional, compact space $C\subseteq [0,1]$. }
\end{example}

\end{document}